\newtheorem{theorem}{Theorem}
\theoremstyle{plain}
\newtheorem{conjecture}{Conjecture}
\numberwithin{equation}{section}
\begin{document}
	\title[From the 1-2-3 Conjecture to the Riemann Hypothesis]{From the 1-2-3 Conjecture to the Riemann Hypothesis}

	\author{Jaros\l aw Grytczuk}
	\address{Faculty of Mathematics and Information Science, Warsaw University
		of Technology, 00-662 Warsaw, Poland}
	\email{j.grytczuk@mini.pw.edu.pl}
	
	\dedicatory{Dedicated to Xuding Zhu for his 60th birthday.}

	\thanks{Supported by the Polish National Science Center Grant 2015/17/B/ST1/02660.}

	\begin{abstract}
This survey presents some combinatorial problems with number-theoretic flavor. Our journey starts from a simple graph coloring question, but at some point gets close to a dangerous territory of the Riemann Hypothesis. We will mostly focus on open problems, but we will also provide some simple proofs, just for adorning.
	\end{abstract}
	
	\maketitle

\section{Introduction} The following question was asked by Karo\'{n}ski, \L uczak, and Thomason in \cite{KaronskiLT JCTB}: \emph{Can the edges of any non-trivial graph be assigned weights from $\{1,2,3\}$ so that adjacent vertices have different sums of incident edge weights?} In other words, the problem is to get a proper coloring of a graph using as colors vertex degrees in a multigraph obtained by locally bounded edge multiplexing.

The problem caught the eye of many researchers, but despite substantial effort it remains open. Many variations emerged in which one tries to get a graph coloring by other manipulations on vertex degrees. Unexpectedly, some of them appear to be related to deep number theoretic problems, like Graham's gcd-Problem \cite{Graham AMM}, the Erd\H{o}s Discrepancy Problem \cite{Erdos} (see \cite{Soundararajan}), or even the Riemann Hypothesis (see \cite{BorweinCRW}). We will present here some selected examples out of a rich variety of problems and results around that matter.

\section{Variations on the 1-2-3 Conjecture}
In this section we present a selection of problems related to the 1-2-3 Conjecture.
\subsection{Edge decorations}
Let $G$ be a simple graph. Suppose that each edge $e$ in $G$ is assigned a real number $f(e)$. For each vertex $v$, let $S(v)$ denote the sum of numbers assigned to the edges incident to $v$, that is, $$S(v)=\sum_{x\in N(v)}f(xv),$$where $N(v)$ is the set of neighbors of $v$. We say that $f$ is a \emph{cool decoration} of the edges of $G$ if $S(u)\neq S(v)$ for every pair of adjacent vertices in $G$. 

The following problem was posed in \cite{KaronskiLT JCTB} (see also \cite{Grytczuk DM}).

\begin{conjecture}[The 1-2-3 Conjecture]\label{Conjecture 1-2-3}
	Every connected graph with at least two edges has a cool edge decoration from the set $\{1,2,3\}$.
\end{conjecture}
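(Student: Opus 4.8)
The plan is, candidly, not to expect a complete proof: Conjecture~\ref{Conjecture 1-2-3} is a notorious open problem, and every strategy below has so far stopped short of the target value $3$. What I would attempt is to push the most successful existing idea---a greedy, vertex-by-vertex weighting---down from the known bound of five colors to three. First I would fix an arbitrary linear order $v_1,\ldots,v_n$ of the vertices and process them one at a time, maintaining the invariant that after step $i$ the color $S(v_i)$ is \emph{committed} to a small window of consecutive integers and already differs from every committed color among the neighbors $v_j$ with $j<i$. The freedom that makes this possible comes from the edges joining $v_i$ to its \emph{unprocessed} neighbors: each such edge is a variable whose final weight I defer, so while fixing $v_i$ I may nudge $S(v_i)$ up or down by choosing these weights, and the deferred increments are absorbed when those later neighbors are themselves processed.

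The arithmetic heart of the argument is a slack estimate. At vertex $v_i$ I must avoid the (at most $\deg(v_i)$) forbidden values coming from already-committed neighbors, using only the increments that the weights $\{1,2,3\}$ permit on the incident edges. With a palette $\{1,\ldots,5\}$ the per-edge swing is wide enough that a straightforward potential argument always locates a safe color, and this is the route that yields the clean bound of five. Shrinking the palette to $\{1,2,3\}$ cuts the available swing, so I would try to compensate by a cleverer choice of processing order (for instance, choosing the order, or an orientation, so that each vertex still retains enough not-yet-committed incident edges when it is handled) together with a two-value bookkeeping scheme in which every vertex keeps two admissible colors differing by a controlled amount, resolving conflicts only once both endpoints are finally committed.

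An alternative I would keep in reserve is the algebraic route via the Combinatorial Nullstellensatz. Here one introduces a variable $x_e$ for each edge and studies the polynomial $P=\prod_{uv\in E}\bigl(S(u)-S(v)\bigr)$, where each $S$ is the relevant linear form in the $x_e$. Since $P$ has degree $|E|$ and each $x_e$ ranges over a $3$-element set, a cool decoration exists as soon as $P$ carries a nonzero coefficient on some top-degree monomial $\prod_e x_e^{a_e}$ with $\sum_e a_e=|E|$ and every $a_e\le 2$. The task then becomes a purely combinatorial coefficient computation, and the plan would be to interpret that coefficient as a signed count of suitable subgraph configurations and argue that it cannot vanish.

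I expect the main obstacle to be precisely the step where low-degree vertices meet tight constraints. A vertex of degree two, flanked by neighbors whose colors are already pinned, offers almost no slack under a three-color palette, and it is these bottlenecks that force the known greedy proofs up to five colors; in the Nullstellensatz formulation the same difficulty reappears as a delicate cancellation that is easy to rule out for a larger palette but resists all current attempts at the value $3$. Bridging exactly this gap---preserving enough local freedom at the sparse, tightly constrained parts of the graph---is the crux on which, in my view, a proof of the conjecture must turn.
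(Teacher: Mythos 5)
There is no proof in the paper to compare against: the statement you were given is Conjecture~\ref{Conjecture 1-2-3} itself, which the paper presents as an \emph{open problem}, not as a theorem. The paper explicitly records that the best general bound is the set $\{1,2,3,4,5\}$ (Kalkowski--Karo\'{n}ski--Pfender), that regular graphs admit $\{1,2,3,4\}$ (Przyby\l o), and that even the decision problem for $\{1,2\}$ is computationally hard. Your proposal is candid about exactly this, and the two strategies you sketch are precisely the ones the paper surveys: the greedy ``deferred-edge'' scheme is the Kalkowski-style argument that the paper uses to prove Theorem~\ref{Theorem Kalkos} (the chip-moving proof, where unprocessed backward/forward edges supply the slack), and the polynomial $P=\prod_{uv\in E}(S(u)-S(v))$ with a top-degree monomial having all exponents $\le 2$ is the Combinatorial Nullstellensatz route of section~2.5, which the paper notes would even yield the stronger list version (Conjecture~\ref{Conjecture 1-2-3 Lists}).

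So the verdict is unavoidable: your submission is an accurate research program, not a proof, and the gap is the conjecture itself. Concretely, the greedy argument's slack estimate genuinely fails at palette size $3$ --- with per-edge swing limited to $\{1,2,3\}$, a vertex whose committed neighbors occupy all reachable sums cannot always be rescued by reordering or orientation tricks, which is exactly why the known bound sticks at $5$; and in the Nullstellensatz formulation, nobody knows how to show the required coefficient is nonzero (it is not even known that \emph{any} finite list size suffices for the list version, as the paper remarks after Conjecture~\ref{Conjecture 1-2-3 Lists}). Your diagnosis of where the difficulty lives (low-degree, tightly constrained vertices; delicate sign cancellations in the coefficient count) is consistent with the literature the paper cites, but neither of your two routes, as described, contains a new idea that would close that gap.
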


The conjecture is optimal as the triangle $K_3$, for instance, cannot be decorated from the set $\{1,2\}$. It is known to be true for some classes of graphs (cliques, $3$-colorable graphs, etc., see \cite{KaronskiLT JCTB}). Currently best general result, due to Kalkowski, Karo\'{n}ski, and Pfender \cite{KalkowskiKP JCTB}, asserts that it holds for the set $\{1,2,3,4,5\}$. It is also known \cite{AddarioDR} that for random graphs the set $\{1,2\}$ is sufficient for a cool decoration (almost surely). Curiously, deciding if the set $\{1,2\}$ is actually sufficient for a given graph is computationally hard, as demonstrated by Dudek and Wajc in \cite{DudekWajc}. Recently Przyby\l o proved \cite{Przybylo} that regular graphs can be decorated from the set $\{1,2,3,4\}$, and also from the set $\{1,2,3\}$ if the vertex degree is at least $10^8$.

\subsection{Vertex decorations} Suppose that each vertex $v$ in a graph $G$ is assigned a real number $f(v)$. Let $S(v)$ be the sum of numbers assigned to the neighbors of $v$:  $$S(v)=\sum_{x\in N(v)}f(x).$$ We say that $f$ is a \emph{cool decoration} of the vertices of $G$ if $S(u)\neq S(v)$, for every pair of adjacent vertices in $G$.

Let $\chi(G)$ denote the chromatic number of a graph $G$. The following conjecture was stated in \cite{CzerwinskiGZ}.

\begin{conjecture}\label{Conjecture Additive}
	Every graph $G$ has a cool decoration of vertices from the set $\{1,2,\dots ,\chi(G)\}$.
\end{conjecture}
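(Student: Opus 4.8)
The plan is to recast the conjecture as the non-vanishing of a single polynomial. Write $x_v$ for the unknown value $f(v)$ of each vertex, set $S(v)=\sum_{w\in N(v)}x_w$, and form
\[
P(x)=\prod_{uv\in E(G)}\bigl(S(u)-S(v)\bigr).
\]
A cool vertex decoration from $\{1,\dots,\chi(G)\}$ is precisely a point of $\{1,\dots,\chi(G)\}^{V(G)}$ at which $P$ does not vanish. Cancelling the common neighbours of $u$ and $v$, each factor becomes the linear form $S(u)-S(v)=x_v-x_u+\sum_{w\in N(u)\setminus(N(v)\cup\{v\})}x_w-\sum_{w\in N(v)\setminus(N(u)\cup\{u\})}x_w$, whose coefficient of $x_v$ is $+1$; in particular no factor is identically zero and $\deg P=|E(G)|$. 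My first attempt would be the Combinatorial Nullstellensatz: it suffices to exhibit a monomial $\prod_v x_v^{t_v}$ of degree $|E(G)|$ with nonzero coefficient and with every $t_v\le\chi(G)-1$, for then a nonvanishing point exists inside the small alphabet $\{1,\dots,\chi(G)\}$.

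This is where the real difficulty appears. The exponents must sum to $|E(G)|$ while each is capped at $\chi(G)-1$, so the method can only work when $|E(G)|\le(\chi(G)-1)\,|V(G)|$; already for a dense bipartite graph such as $K_{m,m}$, where $\chi(G)=2$ but $|E(G)|=m^2$, no such monomial exists and the algebraic shortcut collapses, even though a cool decoration from $\{1,2\}$ does exist there. The probabilistic route fares no better: colouring the vertices independently and uniformly, the event $S(u)=S(v)$ for an edge $uv$ depends on the two neighbourhoods, and the Lovász Local Lemma controls these dependencies only when the alphabet is allowed to grow with the degrees of $G$. Both standard tools therefore prove at best a weakened statement with a palette of size depending on $\Delta(G)$, and the genuine content of the conjecture is the replacement of $\Delta(G)$ by the chromatic number.

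To reach the tight bound I would instead feed a fixed proper colouring $c\colon V(G)\to\{1,\dots,\chi(G)\}$ into the construction. This partitions $V(G)$ into independent sets $V_1,\dots,V_{\chi(G)}$, and the idea is to start from $f=c$ and correct collisions class by class, treating the values on one class as adjustable parameters. The technical heart would be a local-correction lemma: whenever $S(u)=S(v)$ along an edge, re-choosing the value of a single well-placed vertex of the symmetric difference $N(u)\triangle N(v)$ destroys that equality while keeping every $f$-value in $\{1,\dots,\chi(G)\}$. The main obstacle --- and, I believe, the reason the conjecture is still open --- is that editing a vertex simultaneously changes $S$ at all of its neighbours, so a correction can spawn fresh conflicts elsewhere. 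Taming this propagation, for instance by designing a potential that strictly decreases at every correction step or by an entropy-compression argument bounding the length of the correction process, is exactly the step I expect to resist every current technique.
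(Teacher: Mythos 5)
You have not proved the statement, and you should be aware that no proof exists to compare against: this is Conjecture~\ref{Conjecture Additive} in the paper, an open problem (due to Czerwi\'{n}ski, Grytczuk, and \.{Z}elazny). The paper itself only records partial results --- planar bipartite graphs admit cool vertex decorations from lists of size three (Theorem~\ref{Theorem Vertex Planar Bipartite}, proved via Combinatorial Nullstellensatz), and general planar graphs from $\{1,2,\dots,468\}$ --- and it explicitly notes that the conjecture is unknown even for bipartite graphs with an arbitrarily large palette. So the concrete gap in your proposal is simply that it terminates before proving anything: your first route (Nullstellensatz) is correctly diagnosed as structurally incapable of reaching the bound $\chi(G)$, and your second route (iterative local correction starting from a proper colouring) is abandoned at precisely the point where the mathematical content would have to live, namely controlling the propagation of new conflicts created by each correction. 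Announcing that a potential-function or entropy-compression argument ``is expected to resist every current technique'' is an honest assessment, but it is a statement about the difficulty of the problem, not a step of a proof.

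That said, your diagnosis is accurate and consistent with what the paper does. The Nullstellensatz approach requires a nonvanishing top-degree monomial with every exponent at most $\chi(G)-1$, which forces $|E(G)|\leqslant(\chi(G)-1)|V(G)|$; your $K_{m,m}$ example correctly shows this fails badly for dense graphs of small chromatic number, even though a cool decoration from $\{1,2\}$ exists there by hand. This is exactly why the paper's Theorem~\ref{Theorem Vertex Planar Bipartite} is restricted to \emph{planar} bipartite graphs: the bound $|E|\leqslant 2n-4$ yields an orientation with in-degrees at most two, hence a monomial with quadratic exponents and lists of size three --- and note the paper also needs the bipartite sign-coherence of the factors $(S(u)-S(v))$ to rule out cancellation of that monomial, a point your general-graph polynomial would additionally have to address. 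In short: your negative findings are correct and essentially explain the current state of the art, but the conjecture itself remains exactly as open after your proposal as before it.
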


The conjecture is tight since every clique $K_n$ demands $n$ distinct numbers for a cool vertex decoration. It is not even known if the conjecture holds for bipartite graphs, even if the set of decorations is arbitrarily large.

Some results were established for planar graphs. For instance, planar bipartite graphs can be decorated from set the $\{1,2,3\}$ (see \cite{CzerwinskiGZ}). We will give a simple algebraic proof of a more general statement in section 2.5. Currently best result for general planar graphs \cite{BartnickiBCGMZ} uses the set $\{1,2,\dots,468\}$. Both results were obtained by using the algebraic method that will be described in section 2.5.

\subsection{Total decorations} Suppose now that each vertex $v$ and each edge $e$ of a graph $G$ are assigned real numbers $f(v)$ and $f(e)$, respectively. Let $S(v)$ be the sum of numbers assigned to the edges incident to $v$ plus the number $f(v)$: $$S(v)=f(v)+\sum_{x\in N(v)}f(vx).$$ We say that $f$ is a \emph{total cool decoration} of $G$ if $S(u)\neq S(v)$, for every pair of adjacent vertices in $G$.

This variant was introduced by Przyby\l o and Wo\'{z}niak in \cite{PrzybyloWozniak DMTCS}, where they stated the following conjecture.

\begin{conjecture}[The 1-2 Conjecture]\label{Conjecture 1-2}
	Every connected graph has a total cool decoration from the set $\{1,2\}$.
\end{conjecture}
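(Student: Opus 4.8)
The plan is to attack the 1-2 Conjecture by an algorithmic, vertex-by-vertex weighting procedure in the spirit of the Kalkowski--Karo\'{n}ski--Pfender method that settles the edge version for the set $\{1,2,3,4,5\}$. The essential new resource here is the vertex weight: a total decoration supplies one extra free parameter $f(v)\in\{1,2\}$ at every vertex, on top of the incident edge weights, and the whole strategy is to spend that parameter to compensate for the loss of the third edge-value that the pure edge problem seems to require. First I would fix a linear order $v_1,\dots,v_n$ of the vertices, obtained for instance from a spanning-tree search rooted at one vertex and arranged so that leaves and pendant paths are finalized last; on such sparse appendages the single free edge together with $f(v)$ trivially resolves the one or two constraints involved.

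The heart of the argument is a flexibility invariant maintained along the order. When we reach $v_i$, all weights affecting the already-finalized vertices $v_1,\dots,v_{i-1}$ are committed, and I would keep for every still-active vertex a pair of admissible sums of opposite parity, realized by toggling one reserved incident edge. Processing $v_i$ then consists of using the weights of the edges from $v_i$ to its later neighbours, together with $f(v_i)$, to move $S(v_i)$ off the values forbidden by neighbours whose sum (or sum-parity) is already pinned, while simultaneously handing each later neighbour a reserved edge that preserves its own two-candidate freedom. An exchange lemma is needed to certify, step by step, that flipping a reserved edge or the vertex weight changes the relevant sum by exactly the controlled amount, so that the invariant can always be reinstated.

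Concretely I would proceed as follows. I would first establish the exchange lemma describing how each $\{1,2\}$ toggle perturbs the sums; then show that at a vertex of degree $d$ the $d+2$ attainable values of $S(v)$ leave enough room to dodge the forbidden set once the vertex weight is brought into play; and finally finalize the order, resolving each monochromatic adjacency either by a parity difference inherited from the invariant or by the last remaining degree of freedom. Bipartite instances, where one can try to force all sums on one side into a single parity class, would be a natural first target to test the mechanism.

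The main obstacle --- and the reason the conjecture remains open --- is keeping the flexibility invariant alive under the uniquely tight restriction to $\{1,2\}$. Each edge or vertex contributes a swing of only $1$, so the attainable sums at a vertex of degree $d$ form an interval of just $d+2$ consecutive integers, and every local correction erodes this already minimal budget. The known techniques keep the invariant intact only when an extra value is available somewhere (as in the five-weight edge result, or in total analogues that permit an extra edge value), and the genuinely hard point is to run the same bookkeeping when both the vertex swing and every edge swing are pinned to a single unit. Controlling the tight clusters of low-degree vertices, where the parity reserve and the spent vertex weight can coincide and cancel, is precisely where a new idea beyond the current method is required; the algebraic permanent / Combinatorial Nullstellensatz approach of section 2.5 does not help here, since it yields bounds growing with the maximum degree rather than the universal two-element set demanded by the conjecture.
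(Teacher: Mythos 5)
You have not proved the statement, and you should be aware that nobody has: what you were asked to prove is Conjecture~\ref{Conjecture 1-2}, the 1-2 Conjecture of Przyby\l o and Wo\'{z}niak, which is \emph{open}. The paper does not prove it either; it only proves the strictly weaker Theorem~\ref{Theorem Kalkos} (Kalkowski), in which the vertex weights come from $\{1,2\}$ but the edge weights are allowed to come from $\{1,2,3\}$. Your text is candid about this --- it is a strategy outline that ends by conceding that ``a new idea beyond the current method is required'' --- but a strategy outline with an admitted unresolved core is not a proof, so as a proof attempt it has a genuine, fatal gap.

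It is worth locating the gap precisely, because the paper's proof of Theorem~\ref{Theorem Kalkos} shows exactly where your ``exchange lemma'' breaks. In the chip argument, each backward edge $e_j=v_ix_j$ still carries its initial weight $2$ when $v_i$ is processed, and the sum $S(x_j)$ is preserved by a compensating swap: if $f(x_j)=1$ one may shift $(f(e_j),f(x_j))$ from $(2,1)$ to $(1,2)$, and if $f(x_j)=2$ one may shift $(2,2)$ to $(3,1)$. The second swap is what forces the edge value $3$: with edges confined to $\{1,2\}$, a neighbour in the state $f(x_j)=2$, $f(e_j)=2$ admits \emph{no} toggle that leaves $S(x_j)$ unchanged, so that edge contributes no freedom to $S(v_i)$ at all. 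Your count of ``$d+2$ attainable values'' at a vertex of degree $d$ silently assumes every incident weight can be flipped independently, but the flips must simultaneously preserve the committed sums of all earlier neighbours, and under the $\{1,2\}$ restriction that requirement can kill the freedom on every backward edge, leaving fewer options than forbidden values. This is not a bookkeeping detail one can patch with a cleverer vertex ordering or a bipartite warm-up case; it is the precise obstruction that separates the proved Theorem~\ref{Theorem Kalkos} from the open Conjecture~\ref{Conjecture 1-2} (and likewise from its list version, Conjecture~\ref{Conjecture 1-2-Lists}, where the best known result, Theorem~\ref{Theorem Kalkos CN Zhu} of Wong and Zhu, again needs lists of size three on the edges).
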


We give a simple proof of a slightly weaker result of Kalkowski \cite{Kalkowski}. Actually, a similar idea led to the currently best bound in the original 1-2-3 Conjecture (see \cite{KalkowskiKP JCTB}).

\begin{theorem}[Kalkowski \cite{Kalkowski}]\label{Theorem Kalkos}
	Every graph has a total cool decoration with vertices decorated by the set $\{1,2\}$ and edges decorated by the set $\{1,2,3\}$.
\end{theorem}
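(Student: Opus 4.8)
The plan is to process the vertices one at a time in an arbitrary fixed order $v_1, v_2, \dots, v_n$, assigning a vertex weight $f(v_i) \in \{1,2\}$ and adjusting the weights of edges as we go, so that when we reach $v_i$ its current sum $S(v_i)$ is forced to land in a prescribed, sufficiently rich set of residues or values that guarantees it can be made distinct from all already-processed neighbors. The key device is the following: when we process $v_i$, we will have already committed the edges going back to earlier vertices, but the edges from $v_i$ to later vertices $v_j$ ($j>i$) are still free, and each such edge can later be nudged within $\{1,2,3\}$. I would maintain the invariant that after processing $v_i$, the value $S(v_i)$ is fixed and will never change again, and that for each later vertex we retain enough flexibility (a range of achievable sums) to separate it from its earlier neighbors.

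Concretely, the main step is to show that each backward edge $uv_i$ (with $u=v_j$, $j<i$, already processed) together with the vertex weight $f(v_i)$ gives us a controllable contribution. First I would set every edge to weight $1$ as a default and every vertex to weight $1$. When processing $v_i$, the quantity I care about is $S(v_i) = f(v_i) + \sum_{x \in N(v_i)} f(v_i x)$, but since only the backward edges and $f(v_i)$ are frozen at this point, I want to argue that by choosing $f(v_i) \in \{1,2\}$ and perhaps perturbing the already-placed backward edges within $\{1,2,3\}$, I can steer $S(v_i)$ into any of a controlled number of consecutive integer values. A clean way to organize this is to track, for the current vertex, an interval of attainable sums whose length grows with the number of available adjustments; the forbidden values are exactly the sums of the already-fixed earlier neighbors, of which there are at most $d(v_i)$, so I need the attainable interval to have more than $d(v_i)$ elements.

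The bookkeeping that makes this work is the classical Kalkowski trick: for each vertex $v_i$ one keeps two candidate target sums (differing by the choice of $f(v_i)\in\{1,2\}$), and one uses the freedom in the single edge to a not-yet-finalized neighbor to shift between a small number of options, so that at every stage the set of achievable values at $v_i$ has size at least $\deg(v_i)+1$, which by pigeonhole dodges all the forbidden neighbor-sums. The careful statement I would prove by induction is that after fixing $f(v_i)$ and the backward edges, each forward edge retains at least two admissible values in $\{1,2,3\}$, so the flexibility is never exhausted prematurely.

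The hard part will be verifying that the flexibility never runs out: one must guarantee simultaneously that the frozen sum $S(v_i)$ misses all earlier neighbor-sums \emph{and} that enough slack survives in the forward edges to handle future vertices. I expect the crux to be a precise counting argument showing that two bits of freedom (the vertex choice in $\{1,2\}$ and the edge choice in $\{1,2,3\}$) suffice to beat the $\deg(v_i)$ constraints encountered at $v_i$; this is where the specific sizes $\{1,2\}$ and $\{1,2,3\}$ in the statement are exactly calibrated, and getting the invariant strong enough to push through the induction while staying within these sets is the delicate point.
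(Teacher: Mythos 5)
Your proposal has the right skeleton (greedy processing of the vertices, then pigeonhole: more achievable values of $S(v_i)$ than forbidden neighbor-sums), but it is missing the one idea that makes this skeleton work, and its stated invariant is internally inconsistent. You claim both that $S(v_i)$ is frozen forever once $v_i$ is processed and that the forward edges at $v_i$ ``can later be nudged within $\{1,2,3\}$''; these cannot both hold, since every forward edge of $v_i$ is incident to $v_i$, so nudging it later changes $S(v_i)$. Moreover, your claimed source of flexibility --- the choice $f(v_i)\in\{1,2\}$ plus ``the single edge to a not-yet-finalized neighbor'' --- yields only boundedly many achievable sums (at most six), which cannot dodge the up to $d(v_i)$ forbidden values at a vertex of large backward degree. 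The $\deg(v_i)+1$ options you invoke must come from the backward edges, one unit of freedom per backward edge; you do mention ``perturbing the already-placed backward edges,'' but you never explain how to perturb a backward edge $x_jv_i$ without destroying the already-frozen sum $S(x_j)$ of its earlier endpoint, and that is exactly the crux.

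The paper's proof supplies precisely this missing mechanism. Initialize every vertex at $1$ and every edge at $2$ (not $1$, as you propose --- the middle value is essential so the edge can move in either direction), and when processing $v_i$, for each backward edge $e_j=x_jv_i$ allow one optional \emph{compensated} move: if $f(x_j)=1$, you may set $f(e_j):=1$ and $f(x_j):=2$; if $f(x_j)=2$, you may set $f(e_j):=3$ and $f(x_j):=1$. Each such move changes the edge weight and the vertex weight of $x_j$ by opposite amounts, so $S(x_j)$ is preserved exactly, while $S(v_i)$ shifts by $\pm 1$. Performing an arbitrary subset of these $k$ independent moves sweeps $S(v_i)$ over $k+1$ consecutive integers, where $k$ is the number of backward neighbors, so at least one achievable value avoids all $k$ frozen sums $S(x_j)$. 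Each edge is touched only once (at the step of its later endpoint), weights stay in $\{1,2\}$ and $\{1,2,3\}$, and no forward-edge bookkeeping or ``two admissible values per edge'' invariant is needed at all. This compensation trick is where the two sets in the statement are calibrated; without it, your induction has no way to get past vertices of high degree.
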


\begin{proof} Let $v_1,v_2,\dots,v_n$ be any ordering of the vertices of $G$. Initially each vertex is decorated with number $1$, and each edge is decorated with number $2$. One may imagine that there is a chip lying on each vertex, while two chips are lying on each edge. We are going to refine this decoration so as to get a cool one by performing the following greedy procedure.
	
	To explain what we do in the $i$th step, denote by $x_1,x_2,\dots,x_k$ all backward neighbors of $v_i$, and let $e_j=v_ix_j$, with $j=1,2,\dots, k$, denote the corresponding backward edges. For each edge $e_j$ we have two possibilities: (1) if there is only one chip on $x_j$, then we may move one chip from $e_j$ to $x_j$ or do nothing, (2) if there are two chips on $x_j$ we may move one chip from $x_j$ to $e_j$ or do nothing. Notice that none of the sums $S(x_j)$ may change as a result of such action. Also, any action on each edge may change the total sum for $v_i$ just by one. Hence there are $k+1$ possible values for $S(v_i)$. So, at least one combination of chips gives a sum which is different from each of $S(x_j)$. We fix this combination and go to the next step. The proof is complete.
\end{proof}

\subsection{Decorations from lists} One may consider other sets of numbers for decorations of graphs. Recently, Vu\v{c}kovi\'{c} \cite{Vuckovic} proved the multi-set version of the 1-2-3 Conjecture, which implies that every graph has a cool edge decoration from any set of three real numbers, provided that this set is independent over rationals. It is not known if the 1-2-3 Conjecture is true for any $3$-element subset of the integers. However, by the the above result of Vu\v{c}kovi\'{c} \cite{Vuckovic} it follows that for every graph $G$ there is such set (depending on $G$). One may take, for instance, any set of the form $\{1,q,q^2\}$, where $q>\Delta(G)$ and $\Delta(G)$ is the maximum degree of $G$.

Suppose now that each edge $e$ of a graph $G$ is assigned its own list of allowable numbers $L(e)$. As in the list version of traditional graph coloring problem, we assume that decoration of $e$ must be taken from $L(e)$.

The following statement is a strengthening of the 1-2-3 Conjecture proposed in \cite{BartnickiGN JGT}. 

\begin{conjecture}[The List 1-2-3 Conjecture]\label{Conjecture 1-2-3 Lists}
	Every connected graph with at least two edges has a cool edge decoration from arbitrary lists of size three.
\end{conjecture}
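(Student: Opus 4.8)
The statement is a list-coloring strengthening of the 1-2-3 Conjecture and, like the conjecture itself, is open; so what follows is a plan of attack rather than a complete argument. The natural tool is the algebraic (Combinatorial Nullstellensatz) method already alluded to in the discussion of vertex decorations. The plan is to encode a cool decoration as a non-vanishing point of a single polynomial. For every edge $e$ introduce a variable $x_e$ meant to range over its list $L(e)$, and for each adjacent pair $u\sim v$ form the linear form
\[
L_{uv}=S(u)-S(v)=\sum_{e\ni u}x_e-\sum_{e\ni v}x_e .
\]
A decoration $f$ is then cool precisely when the product
\[
P=\prod_{uv\in E(G)}\bigl(S(u)-S(v)\bigr)
\]
does not vanish at the point $\bigl(f(e)\bigr)_{e\in E(G)}$. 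Thus it suffices to show that for any lists with $|L(e)|=3$ there is a choice $f(e)\in L(e)$ with $P\neq 0$.

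First I would invoke Alon's Combinatorial Nullstellensatz in its coefficient form. Since $P$ is a product of $|E(G)|$ nonzero linear forms (note that in a connected graph with at least two edges no edge has both endpoints of degree one, so each $L_{uv}$ is genuinely linear), we have $\deg P=|E(G)|$. If one can exhibit an exponent vector $(t_e)_{e\in E(G)}$ with $\sum_e t_e=|E(G)|$, each $t_e\le 2$, and with the coefficient of $\prod_e x_e^{t_e}$ in $P$ different from zero, then the hypothesis $|L(e)|=3>t_e$ forces a point of $\prod_e L(e)$ at which $P\neq 0$, that is, a cool decoration. It is worth observing that in each form $L_{uv}$ the variable $x_{uv}$ of the common edge cancels, so $L_{uv}$ involves only the remaining edges at $u$ and at $v$; consequently the sought monomial corresponds to a way of selecting, from every factor $L_{uv}$, one incident edge (with sign $+$ if it lies at $u$ and $-$ if it lies at $v$) so that every edge is selected at most twice in total.

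The heart of the matter, and the step I expect to be the genuine obstacle, is proving that some admissible monomial has a nonzero coefficient. That coefficient is a signed count of the selections just described, and the signs can conspire to produce massive cancellation; this is exactly the phenomenon that makes even the non-list 1-2-3 Conjecture resistant. I would try to reorganize the signed sum combinatorially, interpreting admissible selections as orientations or Eulerian-type subgraphs of $G$ in the spirit of an Alon--Tarsi count, and look for a parity or weighting argument that rules out complete cancellation for at least one feasible $(t_e)$. Because such a uniform argument seems hard to come by, a more modest route is to settle the conjecture first for structurally sparse families—trees, bipartite graphs, and graphs of bounded degeneracy—where the incidence pattern of $P$ is simple enough to compute the coefficient by hand or by induction, and then to attempt a bootstrap using vertex orderings or edge contractions. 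Securing nonvanishing of the coefficient for all graphs with lists of size exactly three is precisely where the difficulty concentrates, which is why at present only weaker statements (larger lists, or the non-list version restricted to special classes) are known.
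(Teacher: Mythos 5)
You have correctly recognized that this statement is an open conjecture: the paper gives no proof of it, only the remark in its Combinatorial Nullstellensatz subsection that the conjecture would follow if the polynomial $P=\prod_{uv\in E(G)}\bigl(S(u)-S(v)\bigr)$ could be shown never to vanish on a grid $A_1\times A_2\times\cdots\times A_m$ with every $|A_i|=3$. Your plan of attack --- the same polynomial, the same appeal to Alon's theorem via a degree-$|E(G)|$ monomial with exponents at most two, and the same identification of the non-vanishing coefficient as the genuine obstacle --- is essentially the paper's own framing of the problem, so the proposal is sound as a research outline and faithful to the paper's approach.
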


The problem is wide open. It is not known if there is any finite bound for list sizes guaranteeing a cool edge decoration. Some results were obtained in \cite{BartnickiGN JGT} by using the algebraic method which is described in the next subsection.

\subsection{Combinatorial Nullstellensatz}
Let $G$ be a simple graph with $m$ edges. Assign a variable $x_e$ to each edge $e$ of a graph $G$, and consider a polynomial
\begin{equation}\label{Polynomial 1-2-3}
P=\prod_{uv\in E(G)}(S(u)-S(v)),
\end{equation}
where $S(v)$ is the sum of variables assigned to the edges incident to the vertex $v$. We consider $P$ as a polynomial over the field $\mathbb R$ of real numbers. Clearly, any substitution for variables $x_e$ from lists $L(e)\subseteq \mathbb R$ giving a non-zero value of $P$ is a cool decoration of $G$. Thus, Conjecture \ref{Conjecture 1-2-3 Lists} will follow if we could prove that $P$ does not vanish over any grid $A_1\times A_2\times \dots \times A_m$, with $|A_i|=3$, $A_i\subseteq \mathbb R$.

If $P$ is a one-variable polynomial of degree $k$ (over any field $\mathbb F$) and $A$ is any set of at least $k+1$ elements from $\mathbb F$, then $P$ cannot vanish at all members of $A$. The following theorem of Alon \cite{Alon CN} is an elegant generalization of this simple fact for multivariable polynomials.

\begin{theorem}[Combinatorial Nullstellensatz \cite{Alon CN}] Let $P$ be a polynomial in $\mathbb F[x_1,x_2,\dots,x_m]$ over any field $\mathbb F$. Suppose that there is a non-vanishing monomial $x_1^{k_1} x_2^{k_2}\cdots x_m^{k_m}$ in $P$ whose degree is equal to the degree of $P$. Then, for arbitrary sets $A_i\subseteq \mathbb F$, with $|A_i|=k_i+1$, there is a choice of elements $a_i\in A_i$ such that $P(a_1,a_2,\dots,a_m)\neq 0$.
\end{theorem}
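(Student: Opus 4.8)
The plan is to argue by contradiction, assuming $P(a_1,\dots,a_m)=0$ for every choice $a_i\in A_i$, and to derive a contradiction with the hypothesis that the coefficient of the top-degree monomial $M=x_1^{k_1}\cdots x_m^{k_m}$ is nonzero. The engine behind everything is the observation that a multivariate polynomial whose degree in each variable $x_i$ is strictly smaller than $|A_i|$ cannot vanish on the whole grid $A_1\times\cdots\times A_m$ unless it is identically zero. So the first thing I would prove is this \emph{vanishing lemma}: if $\deg_{x_i}Q\le t_i$ and $Q$ vanishes on a grid $S_1\times\cdots\times S_m$ with $|S_i|\ge t_i+1$, then $Q\equiv 0$. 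This follows by induction on $m$, the base case being the familiar statement that a nonzero one-variable polynomial of degree $\le t_1$ has at most $t_1$ roots; the inductive step groups $Q$ by powers of $x_m$ and applies the hypothesis coordinatewise.

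The second, and more delicate, ingredient is a \emph{reduction step}. For each $i$ set $g_i(x_i)=\prod_{a\in A_i}(x_i-a)$, a monic polynomial of degree $k_i+1$ that vanishes exactly on $A_i$ and whose only top-degree term is $x_i^{k_i+1}$. Using the relations furnished by the $g_i$, I would run a multivariate division to produce a polynomial $\bar P=P-\sum_{i=1}^m h_i g_i$ in which the degree in each $x_i$ has been pushed below $k_i+1$, while keeping total degrees under control so that $\deg(h_ig_i)\le\deg P$. Concretely, every time a monomial carries $x_i$ to a power at least $k_i+1$, one rewrites that power using the identity $g_i(x_i)=0$; iterating terminates and records the quotients as the $h_i$.

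Two properties of $\bar P$ then close the argument. First, since each $g_i$ vanishes on $A_i$, the correction term $\sum h_i g_i$ vanishes on the whole grid, so $\bar P$ and $P$ take equal values there; by the contradiction hypothesis $\bar P$ vanishes on $A_1\times\cdots\times A_m$, and since $\deg_{x_i}\bar P\le k_i<|A_i|$ the vanishing lemma forces $\bar P\equiv 0$. Second, the reduction does not disturb the coefficient of $M$. Indeed each $h_ig_i$ has total degree at most $\deg P=\sum k_i$, and writing $g_i=x_i^{k_i+1}+q_i(x_i)$ with $\deg q_i\le k_i$, every monomial of $h_ig_i$ of the maximal degree $\deg P$ must arise from $h_i\cdot x_i^{k_i+1}$ and is therefore divisible by $x_i^{k_i+1}$. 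Since $M$ is divisible by no $x_i^{k_i+1}$, its coefficient in $\sum h_i g_i$ is zero, so it survives unchanged into $\bar P$ and remains nonzero. This contradicts $\bar P\equiv 0$ and completes the proof.

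The step I expect to require the most care is the reduction producing $\bar P$: one must verify simultaneously that the per-variable degrees genuinely drop below $k_i+1$, that the procedure terminates, and, most importantly for the bookkeeping, that the quotients $h_i$ can be chosen so that no $h_ig_i$ exceeds total degree $\deg P$. Once this degree accounting is pinned down, the divisibility argument preserving the coefficient of $M$ is clean, and the vanishing lemma does the rest.
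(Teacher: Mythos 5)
Your proof is correct. Note that the paper itself gives no proof of this theorem; it is quoted from Alon's paper \cite{Alon CN}, so the only comparison available is with Alon's original argument, and yours is essentially that argument: the induction-on-variables vanishing lemma, reduction of $P$ modulo the polynomials $g_i(x_i)=\prod_{a\in A_i}(x_i-a)$ to push each $\deg_{x_i}$ below $|A_i|$, and the degree bookkeeping showing the correction $\sum h_i g_i$ cannot contain the monomial $M=x_1^{k_1}\cdots x_m^{k_m}$ (Alon phrases the reduction as a separate ideal-membership theorem and then compares coefficients, whereas you perform the division directly inside the proof, but the content is identical). The delicate points you flag are handled correctly: each substitution replaces $x_i^{k_i+1}$ by the lower-degree polynomial $x_i^{k_i+1}-g_i$, so total degree never increases and the process terminates, giving $\deg(h_ig_i)\le\deg P$, hence $\deg h_i\le\deg P-k_i-1$ and $\deg(h_iq_i)<\deg P$; thus any degree-$\deg P$ monomial of $h_ig_i$ is divisible by $x_i^{k_i+1}$, which $M$ is not, so the coefficient of $M$ survives into $\bar P$ and contradicts $\bar P\equiv 0$.
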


Notice that the polynomial $P$ defined in (\ref{Polynomial 1-2-3}) is \emph{uniform} (all monomials have the same degree), and moreover, its degree is equal to the number of variables in $P$. So, there is a chance for a non-vanishing monomial with all exponents equal to one, which would imply a cool decoration from lists of size two. Using this method and some ideas from \cite{BartnickiGN JGT}, Wong and Zhu \cite{WongZhu C} proved the following list version of Theorem \ref{Theorem Kalkos}.

\begin{theorem}[Wong and Zhu \cite{WongZhu C}]\label{Theorem Kalkos CN Zhu}
	Every graph has a total cool decoration form any lists of size two assigned to the vertices and any lists of size three assigned to the edges.
\end{theorem}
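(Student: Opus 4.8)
The plan is to apply the Combinatorial Nullstellensatz to the total analogue of the polynomial in \eqref{Polynomial 1-2-3}. Introduce a variable $y_v$ for every vertex and a variable $x_e$ for every edge, set $S(v)=y_v+\sum_{e\ni v}x_e$, and form
\[
P=\prod_{uv\in E(G)}\bigl(S(u)-S(v)\bigr).
\]
Each of the $m=|E(G)|$ factors is a nonzero linear form (it carries the coefficient $+1$ on $y_u$, and in fact the variable $x_{uv}$ cancels inside its own factor), so $P$ is homogeneous of degree exactly $m$. A nonzero value of $P$ at a point of a grid $\prod_v L(v)\times\prod_e L(e)$ is precisely a total cool decoration from the lists. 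Hence, by the Combinatorial Nullstellensatz, it suffices to exhibit a monomial of $P$ of full degree $m$ with nonzero coefficient in which every vertex variable $y_v$ has exponent at most $1$ and every edge variable $x_e$ has exponent at most $2$: such a monomial certifies that lists of size $2$ on the vertices and size $3$ on the edges suffice.

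To locate the monomial I would be guided by a spanning tree (treating each component separately, we may assume $G$ connected). Fix a spanning tree $T$, root it at a vertex $r$, and orient every tree edge from parent to child; the heads of the tree edges are then exactly the $n-1$ non-root vertices. Selecting, from the factor of each tree edge, the vertex variable of its head yields the squarefree product $\prod_{v\neq r}y_v$ of degree $n-1$. From the factor of each of the remaining $m-(n-1)$ edges I would instead select an \emph{edge} variable $x_f$ with $f$ incident to an endpoint of that edge, arranging that no edge variable is picked more than twice. The resulting monomial $M=\prod_{v\neq r}y_v\cdot\prod_f x_f^{c_f}$ has degree $(n-1)+\bigl(m-(n-1)\bigr)=m$ with every $c_f\le 2$, so it meets the exponent constraints; the combinatorial content here is to verify, using the cycle structure of $G$ relative to $T$, that the non-tree factors can distribute their edge variables within this multiplicity budget.

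The decisive step, and the main obstacle, is to show that the coefficient of $M$ in $P$ does not vanish. Expanding the product expresses this coefficient as a signed sum, a permanent-like quantity, over all ways of picking one variable from each factor whose product equals $M$, and a priori these signed contributions could cancel. If one insists that the tree factors supply the vertex variables, that assignment is forced and unique, since the edge at the root can only head into its child, which then forces the next edge, and so on down the tree. The danger therefore lies entirely in alternative global assignments, in which a non-tree factor supplies a vertex variable while a tree factor is pushed onto an edge variable, together with the several ways of assembling the $\prod_f x_f^{c_f}$ part. I expect the resolution to require processing the vertices in a carefully chosen order and pinning down the forced choices one vertex at a time, so that at most one selection survives and the coefficient is $\pm1$; alternatively, one argues an Alon--Tarsi-type parity count showing that the number of surviving selections is odd. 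This is exactly the technical heart handled by Wong and Zhu, and is where the argument is genuinely delicate rather than routine. Once such a monomial is secured, the Combinatorial Nullstellensatz delivers a grid point with $P\neq 0$, that is, a total cool decoration from the prescribed lists of sizes $2$ and $3$.
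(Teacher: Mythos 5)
Your proposal sets up the right framework, but it is not a proof: the decisive step is missing, and you say so yourself. Note first that the paper does not prove this theorem either --- it states it as a result of Wong and Zhu, obtained by the Combinatorial Nullstellensatz method described in section 2.5, so the only thing to compare against is that general method, which your reduction does follow correctly: a nonzero value of $P=\prod_{uv\in E(G)}(S(u)-S(v))$ on the grid of lists is a total cool decoration, $P$ is homogeneous of degree $m$, and by the Nullstellensatz it suffices to find a monomial of degree $m$ with nonzero coefficient in which every $y_v$ has exponent at most $1$ and every $x_e$ has exponent at most $2$. All of this is the routine part.

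The genuine gap is the nonvanishing of the coefficient of your candidate monomial $M$, which is precisely the content of Wong and Zhu's theorem, and your proposal defers it rather than proves it. Two specific points make the gap worse than a mere omission. First, your uniqueness argument (tree factors forced to supply the child's vertex variable) only rules out competing selections \emph{within} the class of assignments where tree factors give vertex variables and non-tree factors give edge variables; the coefficient of $M$ is a signed sum over \emph{all} selections producing $M$, including those where a non-tree factor supplies some $y_v$ and a tree factor is pushed onto an edge variable, and these cross terms can and do occur with both signs, so the conclusion ``the coefficient is $\pm 1$'' does not follow. Second, even the combinatorial feasibility of your $\prod_f x_f^{c_f}$ part --- distributing the $m-(n-1)$ non-tree factors onto incident edge variables with every multiplicity $c_f\le 2$ --- is asserted, not verified, and it is not clear it can always be arranged (a vertex of high degree in dense graphs creates competition for the same few incident edges). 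Wong and Zhu do not argue monomial-by-monomial from a spanning tree at all; they control the relevant coefficients via a permanent computation (an Alon--Tarsi-type signed count) together with a structural induction, and that machinery is exactly what your sketch labels ``the technical heart'' and leaves out. As it stands, the proposal is a correct reduction plus an honest acknowledgment that the theorem itself remains unproved.
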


This is very close to the natural list analogue of the 1-2 Conjecture, stated by Przyby\l o and Wo\'{z}niak in \cite{PrzybyloWozniak ElJC}, and independently by Wong and Zhu in \cite{WongZhu JGT}.

\begin{conjecture}[The List 1-2 Conjecture]\label{Conjecture 1-2-Lists}
	Every graph has a total cool decoration from any lists of size two assigned to the vertices and edges.
\end{conjecture}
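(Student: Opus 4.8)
The plan is to attack the List 1-2 Conjecture by the algebraic method of section 2.5, exactly as it was deployed for the edge version and for Theorem~\ref{Theorem Kalkos CN Zhu}. First I would set up the governing polynomial. Let $G$ have $n$ vertices and $m$ edges; assign a variable $x_v$ to each vertex $v$ and a variable $x_e$ to each edge $e$, and set the total sum
\[
S(v)=x_v+\sum_{w\in N(v)}x_{vw}.
\]
Then form $P=\prod_{uv\in E(G)}(S(u)-S(v))$, a polynomial over $\mathbb{R}$ in the $n+m$ variables. As in the edge case, any evaluation of the variables from their prescribed lists that makes $P\neq 0$ is precisely a total cool decoration. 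Since every factor $S(u)-S(v)$ is a linear form, $P$ is uniform of degree exactly $m$.

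The point of lists of size two, on vertices and on edges alike, is that the Combinatorial Nullstellensatz reduces everything to exhibiting a non-vanishing monomial of degree $m$ in which every exponent is at most $1$. Concretely, it suffices to find a size-$m$ subset $T$ of the $n+m$ variables such that the multilinear monomial $\prod_{z\in T}x_z$ — whose degree equals $\deg P=m$ — has a nonzero coefficient in $P$. So the analytic problem becomes a single purely combinatorial coefficient-extraction question: for which graphs does such a selection $T$ exist?

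The natural route is to read this coefficient combinatorially. Expanding $P$, the coefficient of $\prod_{z\in T}x_z$ is a signed sum over functions $\phi$ that pick from each edge-factor $S(u)-S(v)$ exactly one of its variables, with sign $+1$ if it comes from $S(u)$ and $-1$ if from $S(v)$, subject to the requirement that the $m$ chosen variables are pairwise distinct and fill out $T$. (Note that $x_{uv}$ itself cancels in its own factor, so the variables available in the factor of $uv$ are $x_u,x_v$ together with the edge-variables incident to $u$ or $v$.) This is a weighted system-of-distinct-representatives condition: each of the $m$ edge-factors must be matched bijectively to a distinct variable it contains, so the coefficient is a signed count of perfect matchings in the factor–variable incidence structure. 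I would try to build such a matching greedily along a fixed vertex ordering $v_1,\dots,v_n$, imitating the chip-moving flexibility in the proof of Theorem~\ref{Theorem Kalkos}, ensuring that the backward edges of $v_i$ always supply enough distinct representatives.

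The hard part — and the reason the conjecture remains open — is sign cancellation. Many selection functions $\phi$ produce the same multilinear monomial with opposite signs, so even an abundance of representatives does not guarantee that the signed count is nonzero rather than accidentally zero. This is exactly where the known results stop short of size two on both sides: Wong and Zhu could afford an extra variable per edge (lists of size three), which enlarges the monomial search space enough to leave a surviving term, whereas with size two on both vertices and edges the degree $m$ of $P$ matches the number of available slots so tightly that no slack remains to absorb cancellation. I expect overcoming this to require either a parity or orientation argument that forces the surviving signs to align, or a structural decomposition of $G$ that pins down the representative system so rigidly that at most one $\phi$ contributes to the chosen monomial.
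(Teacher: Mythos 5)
The statement you are trying to prove is stated in the paper as an \emph{open conjecture} (due to Przyby\l o--Wo\'{z}niak and, independently, Wong--Zhu); the paper offers no proof of it, and your proposal does not constitute one either. What you have written is a correct and standard setup --- the polynomial $P=\prod_{uv\in E(G)}(S(u)-S(v))$ over the $n+m$ vertex and edge variables, the observation that $P$ is homogeneous of degree $m$, and the reduction via Combinatorial Nullstellensatz to exhibiting a multilinear monomial of degree $m$ with nonzero coefficient --- but this is exactly the framework already used by Wong and Zhu to prove Theorem~\ref{Theorem Kalkos CN Zhu} (lists of size two on vertices, size three on edges), and the entire mathematical content of the List 1-2 Conjecture lies in the step you leave open. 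Your own text concedes this: the ``hard part'' of showing that the signed count of representative systems does not cancel to zero is precisely what nobody knows how to do, and gesturing at ``a parity or orientation argument'' or ``a structural decomposition'' is a research programme, not a proof. A proof attempt must actually produce the non-vanishing coefficient (or some other certificate); your greedy matching idea, imitating the chip argument of Theorem~\ref{Theorem Kalkos}, at best produces \emph{one} system of distinct representatives, which establishes that the monomial occurs in the expansion but says nothing about whether its signed coefficient survives cancellation --- and that distinction is the whole problem.

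One further caution about the framing: it is not quite right to say that the failure is because ``the degree $m$ of $P$ matches the number of available slots so tightly that no slack remains.'' There are $n+m$ variables and only $m$ factors, so there is plenty of room to choose $m$ distinct representatives; the obstruction is purely the possible vanishing of a signed sum (in the language of Wong and Zhu, whether the relevant matrix has ``permanent index'' at most $1$), not a shortage of variables. So the honest assessment is: your proposal correctly identifies the known reduction, correctly identifies why it is hard, and proves nothing beyond what Theorem~\ref{Theorem Kalkos CN Zhu} already gives. The conjecture remains open after your argument exactly as it was before it.
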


To illustrate the method based on Combinatorial Nullstellensatz, we give a simple proof of the result from \cite{CzerwinskiGZ} on vertex decorations of planar bipartite graphs.

\begin{theorem}[Czerwi\'{n}ski, Grytczuk, and \.{Z}elazny \cite{CzerwinskiGZ}]\label{Theorem Vertex Planar Bipartite}
	Every planar bipartite graph has a cool vertex decoration from any lists of size three assigned to the vertices.
\end{theorem}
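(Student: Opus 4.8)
The plan is to apply the Combinatorial Nullstellensatz to the vertex analogue of the polynomial in (\ref{Polynomial 1-2-3}). Assign a variable $x_v$ to each vertex $v$, set $S(v)=\sum_{w\in N(v)}x_w$, and form
\[
P=\prod_{uv\in E(G)}\bigl(S(u)-S(v)\bigr),
\]
a homogeneous polynomial whose degree equals the number of edges $m=|E(G)|$. Any substitution making $P\neq 0$ is a cool vertex decoration, so by the Combinatorial Nullstellensatz it suffices to exhibit a monomial $\prod_v x_v^{k_v}$ with $\sum_v k_v=m$, each exponent $k_v\le 2$, and nonzero coefficient; then lists of size $3\ge k_v+1$ will do (restrict each list to a subset of the right size).

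The key observation is that bipartiteness kills all sign cancellation. Fix the bipartition $V=X\cup Y$ and note that for an edge $uv$ with $u\in X$, $v\in Y$ the factor $S(u)-S(v)$ is a sum of the $Y$-variables $x_w$ ($w\in N(u)$) with coefficient $+1$ and the $X$-variables $x_w$ ($w\in N(v)$) with coefficient $-1$. Expanding $P$, a monomial arises by choosing, for each edge $e=uv$, one vertex $\phi(e)\in N(u)\cup N(v)$; the sign of the resulting term is $+1$ exactly when $\phi(e)\in Y$, so the total sign of the term is $(-1)^{|\{e:\phi(e)\in X\}|}$. Since $|\{e:\phi(e)\in X\}|=\sum_{v\in X}|\phi^{-1}(v)|$ depends only on the exponent profile $(k_v)$, all terms contributing to a fixed monomial $\prod_v x_v^{k_v}$ carry the same sign. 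Hence the coefficient of that monomial equals $\pm$ the number of maps $\phi\colon E\to V$ with $\phi(e)\in N(u)\cup N(v)$ and $|\phi^{-1}(v)|=k_v$; in particular it is nonzero as soon as one such $\phi$ exists.

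It remains to produce a single admissible $\phi$ with every vertex hit at most twice. The simplest choice takes $\phi(e)$ to be an endpoint of $e$ (both endpoints lie in $N(u)\cup N(v)$), so that $\phi$ is just an orientation of $G$ and $|\phi^{-1}(v)|$ is the in-degree of $v$. Thus I only need an orientation of $G$ with maximum in-degree at most $2$. By Hakimi's theorem such an orientation exists precisely when every subgraph $H$ satisfies $|E(H)|\le 2|V(H)|$, and this inequality is guaranteed for planar bipartite graphs by Euler's formula, which gives $|E(H)|\le 2|V(H)|-4$ whenever $|V(H)|\ge 3$ (and trivially otherwise, since subgraphs of planar bipartite graphs are again planar bipartite). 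Setting each $k_v$ equal to the in-degree of $v$ then yields the required nonvanishing monomial with all exponents at most $2$, and the Combinatorial Nullstellensatz completes the proof.

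The part I expect to be most delicate is the sign computation of the second paragraph: the whole argument hinges on the fact that in a bipartite graph every admissible term for a given monomial has the same sign, so that the mere \emph{existence} of a combinatorial witness $\phi$ forces the coefficient to be nonzero. Once this is in place, the problem collapses to the purely combinatorial question of bounding in-degrees, where the planar bipartite edge bound $m\le 2n-4$ is exactly the slack needed to reach in-degree $2$ rather than $3$.
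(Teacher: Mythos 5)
Your proposal is correct and takes essentially the same route as the paper's proof: the bipartite structure forces all terms with a given exponent profile to carry the same sign (so a single combinatorial witness makes the coefficient nonzero), and the planar bipartite bound $|E(H)|\le 2|V(H)|-4$ yields an orientation with in-degree at most two, which supplies the non-vanishing monomial required by the Combinatorial Nullstellensatz. Your write-up is in fact a bit more explicit than the paper's at two points the paper leaves implicit --- the sign bookkeeping via the map $\phi$ and the appeal to Hakimi's theorem (with the hereditary edge bound) for the orientation --- but the underlying argument is identical.
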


\begin{proof}
	Let $G$ be a planar bipartite graph with bipartition classes $X$ and $Y$. Assign to each vertex $u$ in $X$ a variable $x_u$ and to each vertex $v$ in $Y$ a variable $y_v$. Let $S(u)$ denote the sum of variables assigned to the neighbors of $u$. Consider the polynomial
	
	\begin{equation}
	P=\prod_{uv\in E(G),u\in X, v\in Y}(S(u)-S(v)).
	\end{equation}
	Notice that all variables for $X$ appear with minus sign in the sum $S(u)-S(v)$, while variables for $Y$ appear with plus sign. This implies that all monomials in $P$ with the same sequence of exponents must have the same sign. Hence, none of monomials formed by choosing one variable from each factor $(S(u)-S(v))$ will eventually vanish in $P$ (as $P$ is a polynomial over the filed of real numbers).
	
	To complete the proof it suffices to demonstrate that there is a choice of variables giving a monomial with at most quadratic exponents. To see this recall that a planar bipartite graph on $n$ vertices can have at most $2n-4$ edges, and therefore it can be oriented so that each vertex has at most two incoming edges. Thus, we may form a desired monomial by choosing from each factor $(S(u)-S(v))$ either $x_u$ or $y_v$ according to the orientation of the edge $uv$. By Combinatorial Nullstellensatz, this completes the proof.
\end{proof}

The same argument gives the well-known result of Alon and Tarsi \cite{AlonTarsi} on $3$-choosability of planar bipartite graphs. 

\subsection{Ironic decorations}
This variation on the 1-2-3 Conjecture, proposed by \.{Z}elazny, is slightly different from previous as it uses multiplication of numbers instead of addition.

Suppose that each vertex $v$ of a graph $G$ is assigned a real number $f(v)$. Let $M(v)=f(v)d_v$ be the product of the assigned number by the degree $d_v$ of the vertex $v$. We say that $f$ is an \emph{ironic decoration} of $G$ if $M(u)\neq M(v)$ for every pair of adjacent vertices in $G$.

The following conjecture was stated in \cite{BosekDGSSZ DM}.

\begin{conjecture}\label{Conjecture Iron}
	Every graph $G$ has an ironic decoration by the set $\{1,2,\dots, \chi(G)\}$.
\end{conjecture}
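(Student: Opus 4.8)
The plan is to attack Conjecture~\ref{Conjecture Iron} by the algebraic method of Section~2.5, after first isolating the genuinely hard edges. The constraint $M(u)\neq M(v)$ on an edge $uv$ reads $f(u)d_u\neq f(v)d_v$, so it holds automatically whenever the truncated progressions $\{d_u,2d_u,\dots,\chi(G)d_u\}$ and $\{d_v,2d_v,\dots,\chi(G)d_v\}$ are disjoint. In particular, the all-ones assignment $f\equiv 1$ gives $M(v)=d_v$ and is already an ironic decoration unless $G$ contains an edge joining two vertices of equal degree; and on any edge $uv$ with $d_u=d_v=d$ the condition collapses to $f(u)\neq f(v)$. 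Thus regular graphs are settled at once (an ironic decoration is then literally a proper colouring, so $\{1,\dots,\chi(G)\}$ suffices by definition), and in general the obstruction concentrates on the subgraph $G^{=}$ spanned by the equal-degree edges, where the requirement is exactly a proper colouring, together with the residual \emph{dangerous} cross-edges $uv$ (with $d_u\neq d_v$) whose progressions do meet.

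First I would set up the polynomial
\[
P=\prod_{uv\in E(G)}\bigl(d_u x_u-d_v x_v\bigr),
\]
one variable $x_v$ per vertex, so that any substitution $x_v=f(v)$ with $P\neq 0$ is an ironic decoration. Since $P$ is homogeneous of degree $m=|E(G)|$, Combinatorial Nullstellensatz reduces the conjecture to exhibiting a full-degree monomial $\prod_v x_v^{k_v}$ with nonzero coefficient and every exponent $k_v\le\chi(G)-1$; one then chooses each set inside $\{1,\dots,\chi(G)\}$. Such monomials correspond to orientations of $G$ (pick from each factor the variable of the head), the exponent $k_v$ being the in-degree of $v$, so the target is an orientation of $G$ with all in-degrees at most $\chi(G)-1$ whose monomial survives in $P$.

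Two points then have to be controlled, and here I expect to recycle the clean part of the bipartite argument behind Theorem~\ref{Theorem Vertex Planar Bipartite}. Cancellation among orientations yielding the same monomial is the first: when $G$ is bipartite one may, exactly as there, fix signs so that one class carries $+$ and the other $-$, whence all contributions to a fixed monomial share a sign and the positive coefficients $d_u$ forbid cancellation. The multiplicative weights $d_u$ are in fact an advantage over the additive polynomial $\prod(x_u-x_v)$, since distinct degrees break the symmetry that produces Alon--Tarsi-type cancellations; quantifying this for non-bipartite $G$ is where I would spend most effort.

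The second point, and the main obstacle, is the existence of the orientation itself: forcing every in-degree below $\chi(G)$ demands that $G$ be essentially $(\chi(G)-1)$-degenerate, which fails badly for sparse graphs of small chromatic number (already $K_{2,3}$ admits no orientation with all in-degrees $\le 1$). This is precisely the gap between chromatic number and degeneracy that leaves the additive Conjecture~\ref{Conjecture Additive} open even for bipartite graphs, and I expect it to be the crux here as well. The multiplicative structure does offer a foothold the additive problem lacks: a dangerous cross-edge excludes only a single value of $f$ at each endpoint, so the effective constraint graph is $G^{=}$ together with the dangerous cross-edges rather than all of $G$. The real task is therefore to orient \emph{this} reduced graph with in-degrees below $\chi(G)$ while maintaining a proper colouring of $G^{=}$, and turning that reduction into a theorem—controlling the interaction between the colouring requirement on $G^{=}$ and the lone forbidden value contributed by each dangerous cross-edge—is the step I would not expect to complete in general.
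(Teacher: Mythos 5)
The statement you are attacking is a conjecture, and the paper does not prove it: Conjecture~\ref{Conjecture Iron} is presented as open, with only two remarks attached --- that it holds trivially for regular graphs (your first observation, for exactly the reason you give), and that it can be recast as a list-coloring problem in which each vertex $v$ receives the list $\{d_v,2d_v,\dots,kd_v\}$, a cascade rooted at $d_v$. The paper's route from there is entirely different from yours: instead of Combinatorial Nullstellensatz it passes to the List Cascade Coloring Conjecture (Conjecture~\ref{Conjecture List Cascade}), then to the Rainbow Cascades Conjecture (Conjecture~\ref{Conjecture Rainbow Cascade}) and Bosek's arithmetic graphs $B_k$, for which $\chi(B_k)=k$ is known only for special lengths: $k=p-1$ for a prime $p$ (Theorem~\ref{Theorem Cascades p-1}, via the multiplicative residue coloring), $k=(p-1)/2$, $k=p^2-p$, and all $k$ up to $194$. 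So there is no proof in the paper against which your argument could be matched; both your plan and the paper's reduction leave the conjecture open.

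Judged on its own terms, your proposal is honest about where it fails, and the gap you name is real. The Nullstellensatz setup is sound: $P=\prod_{uv\in E(G)}(d_ux_u-d_vx_v)$ is homogeneous of degree $m$ with monomials indexed by orientations, and in the bipartite case the sign argument from Theorem~\ref{Theorem Vertex Planar Bipartite} does prevent cancellation. But, as you note, the method needs an orientation with all in-degrees at most $\chi(G)-1$, which is a degeneracy condition rather than a chromatic one; your $K_{2,3}$ example is correct (six edges cannot be distributed among five vertices with in-degree at most one each), and this mismatch is precisely why the additive Conjecture~\ref{Conjecture Additive} remains open even for bipartite graphs. Your proposed repair --- restricting attention to the equal-degree subgraph $G^{=}$ plus the dangerous cross-edges --- is where the argument stops: you never produce the required orientation of the reduced graph, and your claim that a dangerous cross-edge ``excludes only a single value of $f$ at each endpoint'' is imprecise (for an edge with $d_u=2$, $d_v=3$ and $\chi(G)\geqslant 6$, both $(3,2)$ and $(6,4)$ are forbidden pairs; the correct statement is that each value at one endpoint forbids at most one value at the other). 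Since you yourself acknowledge that the final step cannot be completed, the proposal should be read as a plausible plan of attack, not a proof; it does, however, correctly locate the crux, and its reduction to $G^{=}$ plus weak cross-constraints is a genuinely different, and possibly complementary, angle to the paper's number-theoretic reformulation via cascades.
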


Notice that the conjecture is trivially true (and optimal) for regular graphs. Also it may be stated as a special case of a restricted list coloring problem. Suppose that each vertex $v$ is assigned a list $L(v)=\{d_v,2d_v,\dots,kd_v\}$, where $d_v$ is the degree of $v$. In other words, the list has a form of a \emph{homogenous arithmetic progression}. Then an ironic decoration of $G$ can be extracted from any proper coloring of $G$ from these lists by striking out factors $d_v$.

This observation leads to a more general problem. For brevity, any set of the form $\{r,2r,\dots,kr\}$ will be called a \emph{cascade} of length $k$ (\emph{rooted} at $r$).

\begin{conjecture}[The List Cascade Coloring]\label{Conjecture List Cascade}
	Every graph $G$ is colorable from arbitrary cascades of length $\chi(G)$.
\end{conjecture}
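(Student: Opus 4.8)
The plan is first to translate the multiplicative condition into an additive one, so as to isolate precisely where the special structure of cascades must enter. Writing each admissible color as $c_v r_v$ with $c_v\in\{1,2,\dots,k\}$ and $k=\chi(G)$, a proper coloring from the cascades is a choice of integer coefficients $c_v$ with $c_u r_u\neq c_v r_v$ for every edge $uv$. Taking logarithms turns this into ordinary list coloring from the lists $L'(v)=\log r_v+B$, where $B=\{\log 1,\log 2,\dots,\log k\}$ is one fixed $k$-element set translated differently at each vertex. Thus the conjecture asserts that translates of a single $k$-set always suffice, which is far more restrictive on the adversary than arbitrary lists of size $k$; this is exactly what makes the statement plausible even though graphs with $\chi(G)=k$ need not be $k$-choosable (for instance $K_{2,4}$ with $k=2$).

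The second step is to recognize that the algebraic method of Section 2.5 cannot, by itself, settle the conjecture. The natural polynomial is $Q=\prod_{uv\in E}(r_ux_u-r_vx_v)$, and one would like a nonvanishing monomial $\prod_v x_v^{c_v}$ of full degree $m=|E(G)|$ with every exponent $c_v\le k-1$. But expanding $Q$ shows that the coefficient of $\prod_v x_v^{c_v}$ equals $\bigl(\prod_v r_v^{c_v}\bigr)$ times the corresponding Alon--Tarsi coefficient (the signed count of Eulerian suborientations) of the plain graph polynomial $\prod_{uv}(x_u-x_v)$, since every orientation with out-degree sequence $(c_v)$ collects the same factor $\prod_v r_v^{c_v}$. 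As the roots are nonzero, this coefficient vanishes exactly when the Alon--Tarsi one does, independently of the $r_v$. Hence Combinatorial Nullstellensatz is blind to the cascade structure: it can only reprove $k$-choosability, which already fails at $k=\chi(G)$. Any proof must therefore be sensitive to the actual values in the lists, not merely to their sizes.

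This points to a combinatorial, number-theoretic attack. For a fixed edge $uv$ the forbidden pairs $F_{uv}=\{(c_u,c_v):c_ur_u=c_vr_v\}$ form a partial matching of size at most $k$, since $c_u$ determines $c_v=c_ur_u/r_v$ uniquely, and a conflict can occur only when the ratio $r_u/r_v$ is a quotient of two integers in $\{1,\dots,k\}$, that is, a Farey fraction of order $k$. I would order the vertices along a fixed proper $k$-coloring of $G$ and attempt an inductive choice of the coefficients $c_v$, using at each step that every incident constraint removes at most one value and that the relevant constraints are governed by this sparse set of Farey ratios; a deficiency or discharging argument would then try to keep a legal value available at the tight budget $k$.

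The hard part---and the reason the statement remains a conjecture---is exactly this tightness. At $k=\col(G)$ a greedy ordering trivially succeeds, but forcing success at $k=\chi(G)$, where $\col(G)-\chi(G)$ may be arbitrarily large, seems to require genuinely exploiting the global arithmetic coherence of the Farey relations among the roots rather than treating each edge locally. Controlling how these relations propagate around cycles, so that adversarial roots cannot simultaneously block every choice, is where I expect the real difficulty to lie, and where a new idea beyond both Combinatorial Nullstellensatz and naive greedy coloring appears to be needed.
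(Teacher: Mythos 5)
You were asked to prove Conjecture \ref{Conjecture List Cascade}, which is an \emph{open problem}: the paper gives no proof of it, and your proposal, quite candidly, does not give one either. Your preparatory analysis is sound and even insightful. The translation to list coloring from translates $\log r_v+\{\log 1,\dots,\log k\}$ of a single $k$-set is correct; so is the warning that $\chi(G)=k$ does not imply $k$-choosability (and $K_{2,4}$ is indeed a valid witness for $k=2$); and your computation that the coefficient of $\prod_v x_v^{c_v}$ in $\prod_{uv\in E}(r_ux_u-r_vx_v)$ is $\prod_v r_v^{c_v}$ times the Alon--Tarsi coefficient of the plain polynomial $\prod_{uv}(x_u-x_v)$ is correct, since every orientation with out-degree sequence $(c_v)$ acquires the same nonzero factor; hence Combinatorial Nullstellensatz applied to this polynomial genuinely cannot see the cascade structure. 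But the argument you then propose --- an inductive/discharging choice of coefficients along a proper $k$-coloring, governed by the Farey-ratio constraints --- is only announced, never executed, and you concede yourself that it collapses exactly where the problem is tight, at $k=\chi(G)$ rather than at $k=\col(G)$ where greedy trivially works. So the gap is total: no proof is present, here or in the paper.

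What is worth flagging is that your proposed line of attack is graph-local, whereas the route the paper actually takes (Section 3) removes the graph from the problem entirely. The Rainbow Cascades Conjecture (Conjecture \ref{Conjecture Rainbow Cascade}) asks for a single $k$-coloring of $\mathbb N$ under which \emph{every} cascade of length $k$ is rainbow; given such a coloring, any proper $k$-coloring of $G$ lifts at once to a coloring from the cascade lists, by picking from each list the element whose $\mathbb N$-color equals the graph-color of the vertex (adjacent vertices then receive elements of different $\mathbb N$-colors, hence different numbers). This reduction makes the conjecture a purely arithmetic statement, equivalent to Bosek's conjecture $\chi(B_k)=k$ on arithmetic graphs and stronger than Graham's gcd-problem, and the known partial results come from that side: Theorem \ref{Theorem Cascades p-1} settles $k=p-1$ for $p$ prime via multiplicativity of residues, and the cases $k\leqslant 194$ are cited from \cite{CaicedoCP}. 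Your Farey-ratio observation is compatible with this picture, but any per-graph greedy or discharging scheme must fight $\col(G)-\chi(G)$ being unbounded; the universal-coloring formulation is precisely the idea your proposal misses, and it is where progress on this conjecture has actually been made.
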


One natural approach to this problem leads unexpectedly to some deep number theoretic questions. We present this connection in the next section.

\section{Rainbow Cascades and Arithmetic Graphs}

\subsection{Rainbow Cascades}
Suppose that we are given a graph $G$ with some lists $L(v)\subseteq \mathbb N$ assigned to the vertices, each list of size $k$. Suppose further that there is a $k$-coloring of $\mathbb N$ such that every list $L(v)$ is \emph{rainbow} (no two elements in the list have the same color). Then, if a graph $G$ is $k$-colorable, it can also be colored from lists $L(v)$. Indeed, if a vertex $v$ is colored red in a proper coloring of $G$, then just take a red element from the list $L(v)$ and assign it to $v$ as a color from chosen form its list.

This observation leads to the following conjecture.

\begin{conjecture}[The Rainbow Cascades Conjecture]\label{Conjecture Rainbow Cascade}
	For every $k\in \mathbb N$ there is a $k$-coloring of $\mathbb N$ such that every cascade of length $k$ is rainbow.
\end{conjecture}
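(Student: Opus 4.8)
The plan is to look for a coloring with a multiplicative structure, so as to turn the infinitely many cascade constraints into a single finite one. Fix $k$ and seek a \emph{completely additive} coloring modulo $k$: choose a residue $\alpha_p \in \mathbb Z/k\mathbb Z$ for every prime $p$ and set $c(n) = \sum_p v_p(n)\,\alpha_p \pmod k$, where $v_p(n)$ is the exponent of $p$ in $n$. Then $c(jr) = c(j) + c(r)$, so the cascade $\{r,2r,\dots,kr\}$ receives the colors $c(r)+c(1),\, c(r)+c(2),\, \dots,\, c(r)+c(k)$. Since translation by $c(r)$ is a bijection of $\mathbb Z/k\mathbb Z$, every cascade of length $k$ is rainbow \emph{if and only if} the $k$ values $c(1),c(2),\dots,c(k)$ are pairwise distinct modulo $k$, i.e.\ form a complete residue system. (For $k=2$ the choice $\alpha_p=1$ gives the Liouville function $c(n)=\Omega(n)\bmod 2$, the very object whose partial sums flirt with the Riemann Hypothesis.) So the conjecture would follow from a finite existence statement: for every $k$, one can choose the residues $\alpha_p$ (only primes $p\le k$ matter) so that $n\mapsto c(n)$ is a bijection from $\{1,\dots,k\}$ onto $\mathbb Z/k\mathbb Z$. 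This is why the additive ansatz is so natural here --- it is essentially the only way a single coloring can meet the rainbow requirement for all cascades at once.

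Next I would exploit the freedom hidden in large primes. An integer $n\le k$ whose largest prime factor $P^{+}(n)$ exceeds $k/2$ must equal that prime, so the numbers with $P^{+}(n)>k/2$ are exactly the primes $p\in(k/2,k]$, and for each of them the parameter $\alpha_p=c(p)$ occurs in no other value $c(m)$ with $m\le k$. These colors are therefore completely free and can be assigned last, to fill bijectively whatever residues remain. Peeling off primes by decreasing size in this way, the constraints that genuinely couple the $\alpha_p$ concentrate on the \emph{smooth} part of $\{1,\dots,k\}$ --- the integers built from small primes, where the same $\alpha_p$ reappears in many products and prime powers (for instance $c(1),c(p),c(p^{2}),\dots$ form the forced arithmetic progression $0,\alpha_p,2\alpha_p,\dots$). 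The task thus becomes to color this rigid smooth skeleton injectively and then extend by the free large primes.

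The hard part, and where I expect the real difficulty to sit, is proving that the rigid smooth skeleton can always be colored injectively for every $k$. This is a system-of-distinct-representatives problem, and verifying a Hall-type condition requires controlling, for each residue class and each factorization pattern, how many integers up to $k$ compete for the same color. Exactly such counts --- the density of smooth numbers, and the distribution of integers with a prescribed number of prime factors in short or arithmetic intervals --- are the quantities whose sharp behavior is governed by the zeros of the Riemann zeta function. I would therefore expect the additive bijection to be constructible by hand for small or specially structured $k$, while a proof valid for all $k$ seems to demand precisely the kind of analytic number-theoretic control that brings the Riemann Hypothesis into view, consistent with the journey promised in the title.
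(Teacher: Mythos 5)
Your first two paragraphs are a sound reduction, and in fact they are precisely the approach that the known partial results use. Setting $c(n)=\sum_p v_p(n)\,\alpha_p \pmod k$ does make every cascade of length $k$ a translate of $(c(1),\dots,c(k))$ in $\mathbb Z/k\mathbb Z$, so within this ansatz rainbowness of all cascades is equivalent to $c$ being a bijection from $\{1,\dots,k\}$ onto $\mathbb Z/k\mathbb Z$; and your observation that the primes in $(k/2,k]$ are free parameters is correct. The paper's Theorem \ref{Theorem Cascades p-1} is exactly an instance of your ansatz in multiplicative clothing: coloring $n=p^s m$ by the residue of $m$ modulo $p$ becomes, after taking discrete logarithms with respect to a primitive root mod $p$, an additive coloring into $\mathbb Z/(p-1)\mathbb Z$ that is automatically bijective on $\{1,\dots,p-1\}$. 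Likewise the other known cases quoted in the paper ($k=(p-1)/2$, $k=p^2-p$, all $k\le 194$) come from group multiplications on $\{1,\dots,k\}$ compatible with ordinary multiplication, which is the same idea with the cyclic group replaced by a general one.

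The gap is that the proof never happens: the statement you were given is Conjecture \ref{Conjecture Rainbow Cascade}, which is open, and the entire difficulty sits exactly in the finite existence statement you defer to your last paragraph and then only speculate about. A Hall-type condition is named but never formulated, let alone verified, and no argument is given that the ``smooth skeleton'' can be colored injectively for every $k$; indeed no such argument is known (the paper's best general bound is only $\chi(B_k)=(1+o(1))k$, via primes in short intervals). Two of your framing claims are also overstatements. First, the additive ansatz is \emph{sufficient} but not known to be \emph{necessary}, so your ``if and only if'' holds only inside the ansatz and the conjecture is not equivalent to your finite statement; ``essentially the only way'' is unjustified. Second, the suggestion that the missing step is governed by the Riemann Hypothesis reverses the paper's logic: the RH connection there runs through the \emph{balance} of multiplicative $\pm 1$ colorings (the Liouville function and the Erd\H{o}s discrepancy circle of problems), not through the rainbow question, and nothing in your reduction establishes such a link. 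What you have, then, is a correct reformulation that recovers the known special cases, but the conjecture itself remains exactly as open at the end of your argument as at the beginning.
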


This conjecture was posed independently (as a question) by Pach and P\'{a}lv\"{o}lgyi (see \cite{CaicedoCP}). Below we give a simple proof for cascades of length $p-1$, where $p$ is a prime number.

\begin{theorem}\label{Theorem Cascades p-1}
	Let $p$ be a prime number. Then there exists a $(p-1)$-coloring of $\mathbb N$ such that every cascade of length $p-1$ is rainbow.
\end{theorem}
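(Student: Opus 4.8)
The plan is to build the coloring directly from the multiplicative structure of the field $\mathbb{Z}/p\mathbb{Z}$. Since $p$ is prime, the nonzero residues $(\mathbb{Z}/p\mathbb{Z})^*$ form a cyclic group of order exactly $p-1$, which matches the number of colors we are allowed to use. First I would fix a primitive root $g$ modulo $p$, so that every nonzero residue can be written uniquely as $g^c$ with $c\in\{0,1,\dots,p-2\}$. To color an arbitrary $n\in\mathbb{N}$, I strip off all factors of $p$: writing $n=p^{a}m$ with $p\nmid m$, I set the color $c(n)$ to be the discrete logarithm of $m\bmod p$ to base $g$, regarded as an element of $\mathbb{Z}/(p-1)\mathbb{Z}$. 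Stripping the $p$-part is the crucial design choice; without it, every element of a cascade rooted at a multiple of $p$ would collapse to the forbidden residue $0$.

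The key structural property to verify is that $c$ is \emph{completely multiplicative}, that is, $c(ab)\equiv c(a)+c(b)\pmod{p-1}$ for all $a,b$. This holds because the $p$-free part of a product is the product of the $p$-free parts, and the discrete logarithm converts products into sums. With this in hand, the rainbow condition becomes a short computation: for a cascade $\{r,2r,\dots,(p-1)r\}$, the colors of its elements are $c(jr)=c(r)+c(j)$ for $j=1,\dots,p-1$. Here I would record the decisive observation that each multiplier $j\in\{1,\dots,p-1\}$ is coprime to $p$, so the $p$-free part of $jr$ is exactly $j$ times the $p$-free part of $r$ and no cancellation of the prime $p$ occurs.

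To finish, note that $1,2,\dots,p-1$ run through all nonzero residues modulo $p$, so their images $c(1),c(2),\dots,c(p-1)$ are precisely the discrete logarithms of all nonzero residues, i.e.\ a complete residue system modulo $p-1$. Translating this system by the fixed constant $c(r)$ permutes it without collisions, so the $p-1$ colors appearing in the cascade are pairwise distinct, and the cascade is rainbow. I expect the only genuinely delicate point to be the bookkeeping around the prime $p$ in both the root $r$ and the multipliers $j$; making explicit that $p\nmid j$ for all $j=1,\dots,p-1$ is what keeps the multiplicative homomorphism well behaved on every cascade and drives the entire argument.
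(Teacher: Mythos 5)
Your proof is correct and is essentially the paper's argument: both strip the $p$-part of $n$ and color by the residue of the $p$-free part modulo $p$, using invertibility of the multipliers $j\in\{1,\dots,p-1\}$ in $(\mathbb{Z}/p\mathbb{Z})^*$ to get distinctness within a cascade. Your passage through a primitive root and discrete logarithms is only a bijective relabeling of the $p-1$ colors (turning the paper's multiplicative cancellation into additive translation), not a different method.
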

\begin{proof} We define a desired coloring as follows. Write a natural number $n$ as $n=p^sm$, where $m$ is not divisible by $p$. Let $r(m)$ be the residue of $m$ modulo $p$. We assign $r(m)$ as a color of the number $n$. Since residue zero is excluded, there are $p-1$ different colors.
	
	It is not hard to see that no two elements of the same cascade may have the same color. Indeed, let $an$ and $bn$ be any two distinct elements of cascade $\{n,2n,\dots, (p-1)n\}$ rooted at $n$. Since $a$ and $b$ are not divisible by $p$, we have $an=p^sma$ and $bn=p^smb$. Consequently, the color of $an$ is $r(ma)$ and the color of $bn$ is $r(mb)$. If these two colors are equal, then, by multiplication properties of residues modulo $p$, also $r(a)=r(b)$, which means that $a=b$. Hence, $an=bn$ and the proof is complete.
\end{proof}

Besides the case $k=p-1$ the Rainbow Cascades Conjecture is known to hold for those values of $k$ for which one may define a group multiplication on the set $\{1,2,\dots, k\}$ compatible with ordinary multiplication inside this set. In particular, it holds for $k=(p-1)/2$, $k=p^2-p$, where $p$ is a prime, and all $k$ up to $194$ (see \cite{CaicedoCP}).

\subsection{Graham's gcd-Problem}
In \cite{Graham AMM} Graham posed the following problem. Let $a_1,a_2,\dots,a_n$ be any distinct positive integers. Prove that some pair $a_i,a_j$ satisfies:

\begin{equation}
\frac{a_i}{\gcd (a_i,a_j)}\geqslant n.
\end{equation}

The problem was solved for sufficiently large $n$ by Szegedy \cite{Szegedy} and independently by Zaharescu \cite{Zaharescu}. Then Balasubramanian and Soundararajan \cite{BalasubramanianS} gave a complete solution by using methods of analytic number theory.

\subsection{Arithmetic Graphs}
The problem of ironic decorations unexpectedly appeared to be related to Graham's gcd-Problem. In an attempt to solve Conjecture \ref{Conjecture Iron}, Bosek defined auxiliary graphs reflecting in some sense the arithmetic proximity of numbers. To present his idea let us define \emph{arithmetic proximity} between two integers $a$ and $b$ as $\max \{a/d,b/d\}$, where $d=\gcd (a,b)$. So, two numbers are arithmetically close if both results of division by their greatest common divisor are relatively small.

Let $k$ be a fixed positive integer. Define a graph on the set $\mathbb N$ by joining $a$ to $b$ if and only if their arithmetic proximity is at most $k$. We will denote these graphs as $B_k$ and call them \emph{arithmetic graphs}.

Motivated by Conjecture \ref{Conjecture Iron}, Bosek posed the following conjecture (see \cite{BosekDGSSZ DM}).

\begin{conjecture}\label{Conjecture Bosek Graphs}
	Every arithmetic graph $B_k$ satisfies $\chi (B_k)=k$.
\end{conjecture}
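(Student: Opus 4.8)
The plan is to separate the two inequalities hidden in $\chi(B_k)=k$ and to attack the hard one through the multiplicative structure implicit in the definition of $B_k$. For the lower bound, observe that $\{1,2,\dots,k\}$ is a clique in $B_k$: for any $a,b\le k$ one has $a/\gcd(a,b)\le a\le k$ and $b/\gcd(a,b)\le b\le k$, so their arithmetic proximity is at most $k$. Hence $\chi(B_k)\ge\omega(B_k)\ge k$. Graham's gcd-Problem (Section 3.2) shows this is sharp at the level of cliques: any $k+1$ distinct integers contain a pair with $a_i/\gcd(a_i,a_j)\ge k+1$, which is not an edge, so in fact $\omega(B_k)=k$. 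Thus the entire difficulty is the upper bound $\chi(B_k)\le k$.

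I would first record that adjacency in $B_k$ depends only on the ratio of the two numbers: writing $a=d a'$, $b=d b'$ with $d=\gcd(a,b)$, the pair $a,b$ is an edge precisely when $a',b'\le k$, i.e.\ when the reduced fraction $a/b$ has numerator and denominator both at most $k$. In particular every edge $ab$ lies inside the cascade $\{d,2d,\dots,kd\}$, while conversely every cascade of length $k$ is a clique. Consequently $\chi(B_k)\le k$ is exactly the Rainbow Cascades Conjecture (Conjecture \ref{Conjecture Rainbow Cascade}) for this $k$: a $k$-colouring of $\mathbb N$ is proper for $B_k$ if and only if it makes every cascade of length $k$ rainbow.

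The algebraic heart of the plan exploits that the positive rationals form the free abelian group $\mathbb Q^{+}\cong\bigoplus_{p}\mathbb Z$, and that $B_k$ is the subgraph of the Cayley graph of $\mathbb Q^{+}$ induced on $\mathbb N$ with connection set the reduced fractions $u/v$ having $u\ne v$ and $u,v\le k$. I would seek a homomorphism $\phi\colon\mathbb Q^{+}\to H$ onto a group $H$ of order $k$ that is injective on $\{1,\dots,k\}$, and colour each $n$ by $\phi(n)$. Such a colouring is proper, since for an edge $ab$ we get $\phi(a)\phi(b)^{-1}=\phi(u)\phi(v)^{-1}\ne e$ from injectivity and $u\ne v$, and it uses exactly $k$ colours because $\{1,\dots,k\}$ already exhausts $H$. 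Such a $\phi$ exists as soon as $\{1,2,\dots,k\}$ carries a group operation $\star$ agreeing with ordinary multiplication whenever the product stays $\le k$: set $\phi(p)=p$ for primes $p\le k$ and $\phi(p)=e$ for $p>k$, and note that every $n\le k$ has all prime factors $\le k$, so $\phi$ fixes $\{1,\dots,k\}$ pointwise. For $k=p-1$ this is realised by $(\mathbb Z/p)^{\times}$, recovering the colouring of Theorem \ref{Theorem Cascades p-1}.

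The main obstacle is precisely the construction of such a compatible group structure on $\{1,2,\dots,k\}$ for general $k$ — or, failing that, a direct proof of $\chi(B_k)\le k$ by other means. The compatibility constraints are rigid: the operation must reproduce every genuine product $ab\le k$, so the multiplicative relations among the small integers already pin down much of $\star$, and it is unclear how to fit the remaining elements into a consistent group of the exact order $k$. This is known to succeed only for sporadic families (such as $k=p-1$, $k=(p-1)/2$, $k=p^2-p$) and for all $k\le 194$, and explaining the scarcity or abundance of admissible $k$ seems to demand number-theoretic input of the same flavour as Graham's gcd-Problem. I would therefore expect progress either from a clever relaxation of the group requirement (a quasigroup, or a partial colouring patched across residue classes) or from number-theoretic estimates controlling how cascades of length $k$ can overlap.
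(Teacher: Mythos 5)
Your proposal is not a proof, and it cannot be compared against one: the statement you were given is Conjecture \ref{Conjecture Bosek Graphs}, which the paper leaves open. The paper's surrounding text contains exactly the observations you make --- that $\{1,\dots,k\}$ (and more generally every cascade of length $k$) is a clique, that Graham's solved gcd-problem gives $\omega(B_k)=k$, and that $\chi(B_k)\le k$ is equivalent to the Rainbow Cascades Conjecture \ref{Conjecture Rainbow Cascade} --- plus partial results you do not mention: $\chi(B_{p-1})=p-1$ for primes $p$ (Theorem \ref{Theorem Cascades p-1}), $\chi(B_k)\le 2k$ from Bertrand's Postulate, and $\chi(B_k)=(1+o(1))k$ from results on primes in short intervals. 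All of the reductions you carry out are correct: the lower bound argument is sound, the characterization of edges by reduced ratios is right, and your homomorphism scheme $\phi\colon\mathbb Q^{+}\to H$ fixing $\{1,\dots,k\}$ is a clean formalization of the sufficient condition quoted in the paper (a group multiplication on $\{1,2,\dots,k\}$ compatible with ordinary multiplication), which is what underlies Theorem \ref{Theorem Cascades p-1} and the cases $k=(p-1)/2$, $k=p^2-p$, $k\le 194$ from \cite{CaicedoCP}.

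The genuine gap is the one you yourself flag in your final paragraph: the entire content of the conjecture is the upper bound $\chi(B_k)\le k$ for \emph{all} $k$, and your plan delivers it only when the compatible group structure on $\{1,\dots,k\}$ exists --- which is precisely the known, sporadic list of cases and is itself conjectural in general. Translating the problem into the existence of such a group (or of a homomorphism injective on $\{1,\dots,k\}$ onto a group of order exactly $k$) does not reduce its difficulty; indeed it is not even known that a compatible group structure must exist whenever $\chi(B_k)=k$, so your route could in principle be strictly harder than the conjecture. A referee would therefore classify your text as a correct survey of the known reductions and partial results, with the key step --- a construction valid for arbitrary $k$, or any other mechanism forcing $\chi(B_k)\le k$ --- entirely missing, exactly as it is missing in the literature.
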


It is not hard to see that this conjecture is stronger than the statement of Graham's problem. Indeed, recall that the \emph{clique number} $\omega(G)$ of graph $G$ is the size of a largest clique in $G$. Notice that Graham's conjecture is equivalent to the statement that $\omega(B_k)=k$. Indeed, taking $n=k+1$ in Graham's problem, we see that among any $k+1$ vertices there must be a pair which is not joined by an edge. Since every graph satisfies $\omega(G)\leqslant \chi(G)$, Bosek's conjecture is an extension of Graham's conjecture.

It is easily seen that Bosek's conjecture is equivalent to the Rainbow Cascades Conjecture. Indeed, $ab$ is an edge in $B_k$ if and only if the numbers $a$ and $b$ belong to the same cascade of length $k$ (rooted at $d=\gcd (a,b)$). Hence, by Theorem \ref{Theorem Cascades p-1} we know that $\chi (B_{p-1})=p-1$ for every prime number $p$. A famous Bertrand's Postulate (asserting that there is a prime between $n$ and $2n$) implies that $\chi(B_k)\leqslant 2k$. Using more exact results on primes in short intervals one may deduce that $\chi(B_k)=(1+o(1))k$ (see \cite{BosekDGSSZ DM}).

\section{Variations on the Erd\H {o}s Discrepancy Problem}

\subsection{Balanced Cascades}

Consider a coloring $f$ of $\mathbb N$ by two colors $\{-1,+1\}$. For a finite subset $A\subseteq \mathbb N$, the number $b_f(A)=|\sum_{x\in A}f(x)|$ is called the \emph{balance} of a coloring $f$ on $A$. For $A=\{1,2,\dots, n\}$ we write $b_f(A)=b_f(n)$. A set $A$ is called \emph{$C$-balanced} if $b_f(A)\in \{0,1,\dots, C\}$ for some integer $C>0$. If $b_f(A)\in \{0,1\}$, then $A$ is said to be \emph{balanced}.

As a weakening of the Rainbow Cascades Conjecture, Bosek asked the following question: \emph{Is there a constant $C$ such that for every $k$ there is a $2$-coloring of $\mathbb N$ in which every cascade of length $k$ is $C$-balanced?} If the Rainbow Cascades Conjecture is true, then we may arbitrarily split the set of colors into two almost equal parts and each rainbow cascade becomes balanced. So, by Theorem \ref{Theorem Cascades p-1} we know that this holds for $k=p-1$, where $p$ is a prime number. 

The following theorem extends this result for arbitrary $k$, answering Bosek's question in the affirmative (see \cite{BosekDGKLZ}).

\begin{theorem}\label{Theorem Balanced Cascades}
	For every $k$ there is a $2$-coloring of $\mathbb N$ such that every cascade of length $k$ is balanced.
\end{theorem}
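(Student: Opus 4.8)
My plan is to look for a coloring that is \emph{completely multiplicative}, i.e. a function $f\colon\mathbb N\to\{-1,+1\}$ determined by an arbitrary choice of signs $f(p)\in\{-1,+1\}$ on the primes and extended by $f(ab)=f(a)f(b)$. The point of this restriction is that it collapses all cascades into a single condition. Indeed, for the cascade rooted at $r$ we have $\sum_{i=1}^{k}f(ir)=\sum_{i=1}^{k}f(i)f(r)=f(r)\,S_k$, where $S_k=\sum_{i=1}^{k}f(i)$; since $f(r)=\pm1$, the balance of \emph{every} cascade of length $k$ equals $|S_k|$. Thus the whole theorem reduces to a single number-theoretic statement: one can choose the signs $f(p)$ so that $|S_k|\le 1$. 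Note that $S_k\equiv k\pmod 2$, so this bound is the best one could hope for (it forces $S_k=0$ for even $k$ and $S_k=\pm1$ for odd $k$), which matches the definition of a balanced set.

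To choose the signs I would work from the largest primes downwards, exploiting the fact that a large prime influences $S_k$ only weakly. If $p$ is a prime with $k/2<p\le k$, then its only multiple in $\{1,\dots,k\}$ is $p$ itself (because $2p>k$), so the sign $f(p)$ enters $S_k$ through the single term $f(p)$. By Bertrand's Postulate there is at least one such prime, and in fact there are $t=\pi(k)-\pi(k/2)$ of them, each contributing an \emph{independent} $\pm1$ that we are free to fix last. Concretely, fix first the signs on all primes $p\le k/2$; this determines $f$ on the set $B$ of $(k/2)$-smooth numbers in $\{1,\dots,k\}$ and hence fixes $m:=\sum_{i\in B}f(i)$. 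The remaining terms of $S_k$ are exactly the $t$ independent signs on the primes in $(k/2,k]$, so $S_k=m+\sum_{k/2<p\le k}f(p)$ can be set to any integer of the correct parity in the interval $[\,m-t,\,m+t\,]$. Hence it suffices to arrange $|m|\le t+1$, after which the free signs push $S_k$ into $\{-1,0,1\}$.

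The crux is therefore to control the smooth sum $m$. The natural idea is to iterate: peel off the primes in the dyadic layer $(k/4,k/2]$, then $(k/8,k/4]$, and so on, at each layer using the newly freed (now no longer perfectly independent, but nearly so) signs to absorb the partial sum accumulated from the still-smaller, smoother numbers. I expect the real difficulty to lie precisely here, in showing that this recursion closes: one must bound the running smooth partial sums well enough that at every layer the number of available free signs still exceeds the residual that has to be cancelled. Since the count of free signs at each scale is governed by the number of primes in a short interval, this is where quantitative information on the distribution of primes (the same ingredient invoked after Theorem \ref{Theorem Cascades p-1} to estimate $\chi(B_k)$) should enter, and verifying that the available slack always dominates the accumulated discrepancy is the main obstacle. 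A clean alternative worth attempting in parallel is to sidestep the recursion with an explicit multiplicative $f$ --- for instance a Legendre-type symbol, which already yields $S_{p-1}=0$ in the prime case of Theorem \ref{Theorem Cascades p-1} --- but I would expect the inductive prime-freeing argument to be the route that succeeds for all $k$.
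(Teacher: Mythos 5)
Your reduction to completely multiplicative colorings (so that every cascade of length $k$ has balance $|S_k|$, where $S_k=\sum_{i=1}^k f(i)$) and your use of the primes in $(k/2,k]$ as independent last-minute signs are both correct, and both appear in the paper: a ``golden seed'' there is exactly a multiplicative sign pattern on $\{1,\dots,k\}$ with $|S_k|\le 1$, and the paper's final correction step is precisely a switching of signs of primes in $[k/2,k]$. But the entire content of the theorem lies in the step you leave open, namely controlling the smooth sum $m$, and your proposed dyadic recursion is not shown to close --- you flag it yourself as ``the main obstacle.'' The concrete difficulty is worse than ``nearly independent'': for a prime $p>\sqrt k$, flipping $f(p)$ changes $S_k$ by $-2f(p)\,S_{\lfloor k/p\rfloor}$, so the leverage of the primes in the layer $(k/(j+1),k/j]$ is proportional to $|S_j|$, a quantity entangled with the very signs the recursion is trying to choose. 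For instance, if $f(2)=-1$ then every prime $p\in(k/3,k/2]$ contributes $f(p)\bigl(1+f(2)\bigr)=0$ to $S_k$, so that whole layer has zero leverage, and the accounting ``free signs per layer exceed the residual to be cancelled'' fails. A greedy/inductive scheme of exactly this kind is Rejmer's algorithm in the paper, and whether it can be run indefinitely is stated there as an open problem (Conjecture \ref{Conjecture Rejmer}).

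The paper closes the gap with the idea you mentioned but set aside: an explicit Legendre-type base coloring. Take $g$ completely multiplicative with $g(p)=+1$ or $-1$ according to whether $p\equiv +1$ or $-1 \pmod 3$, and $g(3)=+1$. Then $\sum_{i=1}^{k}g(i)$ equals the number of $1$'s in the ternary expansion of $k$, so it is nonnegative and at most $\log_3 k+1$ for \emph{every} $k$, not just for $k=p-1$. Consequently only about $\tfrac12\log_3 k$ sign switches are needed, each at a prime $\equiv 1\pmod 3$ lying in $[k/2,k]$ (switching such a prime lowers the sum by exactly $2$ and cannot violate multiplicativity inside $\{1,\dots,k\}$); Dirichlet's theorem on primes in arithmetic progressions supplies enough such primes for large $k$, with small $k$ handled by additional arguments and computation. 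So the repair for your proposal is to abandon the recursion and substitute this explicit base coloring, whose partial sums are already logarithmically small before any correction is made.
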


We will sketch the proof of this result below.

\subsection{Multiplicative coloring} Notice that the coloring $r$ from the proof of Theorem \ref{Theorem Cascades p-1} is \emph{multiplicative}, which means that for every pair of integers $a$ and $b$, we have $r(ab)=r(a)r(b)$ modulo $p$. It follows that we may construct a balanced $2$-coloring with the same property by taking a suitable partition of $\{1,2,\dots, p-1\}$ into "positive" and "negative" elements. This suggests the following approach in general case.

Let $f:\mathbb N\rightarrow \{-1,+1\}$ be a $2$-coloring of positive integers. We say that $f$ is \emph{multiplicative} if $f(ab)=f(a)f(b)$ for every pair $a,b\in \mathbb N$. Notice that, by the uniqueness of prime factorization, any such coloring is determined by fixing colors of prime numbers. Moreover, $f(1)=+1$ must be satisfied. It is also worth noting that each cascade $\{d,2d,3d,\dots\}$ repeats either $f$ or $-f$, that is, $$(f(d),f(2d),f(3d),\dots)=(f(1),f(2), f(3),\dots)$$ or $$(f(d),f(2d),f(3d),\dots)=(-f(1),-f(2),-f(3),\dots),$$ according to whether $f(d)=+1$ or $-1$, respectively.

The most natural multiplicative coloring is obtained by putting $-1$ for every prime number. This is the well known \emph{Liouville function}, denoted as $\lambda(n)$. So, $\lambda (n)$ is positive or negative according to whether $n$ is a product of even or odd number of primes (counting with multiplicity), respectively. For instance, $\lambda(12)=\lambda (2\cdot2\cdot3)=-1$, while $\lambda (40)=\lambda(2\cdot2\cdot2\cdot5)=+1$.

Estimating the balance $b_{\lambda}(n)$ of this coloring is an important and difficult issue. For instance, proving that $b_{\lambda}(n)=o(n)$ is already equivalent to the Prime Number Theorem, while $b_{\lambda}(n)=O(n^{1/2+\epsilon})$ for every $\epsilon>0$ is equivalent to the Riemann Hypothesis (see \cite{BorweinCRW}).

\subsection{Golden Seeds}

Let $k$ be a fixed positive integer. To construct a $2$-coloring of $\mathbb N$ which is balanced on cascades of length $k$ it is enough to find a binary string $S=s_1s_2\dots s_k$ over $\{-1,+1\}$ with the following properties:
\begin{itemize}
	\item[(i)] $s_{ij}=s_is_j$, whenever $ij\leqslant k$,
	\item[(ii)] $\sum_{i=1}^{k}{s_i}\in \{-1,0,1\}$.
\end{itemize}
We will call such strings \emph{golden seeds}. For each $k\geqslant 2$ there are $2^{\pi(k)}$ strings satisfying condition (i), where $\pi(k)$ counts the number of primes in the set $\{1,2,\dots, k\}$. We are going to prove that at least one of them satisfies also condition (ii). For instance, for $k=6$, there are three golden seeds out of eight strings satisfying condition (i):
$$	\begin{array}{|c|c|c|c|c|c|}
\hline
1 & 2 & 3 & 4 & 5 & 6 \\
\hline
+  & - & - & + & - & +\\
+  & - & + & + & - & -\\
+ & + & - & + & -& -\\
\hline
\end{array}
$$
This will be enough to get Theorem \ref{Theorem Balanced Cascades}. Indeed, suppose that $S$ is a golden seed of length $k$. One may extended $S$ to a multiplicative 2-coloring $f$ of $\mathbb N$ by taking $f(i)=s_i$ for $i=1,2,\dots,k$, and anything for primes greater than $k$ (other values of $f$ are determined by multiplicativity). Now, if $A=\{d,2d,\dots, kd\}$ is any cascade of length $k$, then
$$f(d)+f(2d)+\dots+f(kd)=f(d)\sum_{i=1}^{k}{f(i)}=f(d)\sum_{i=1}^{k}s_i\in \{-1,0,1\},$$by condition (ii).

Theorem \ref{Theorem Balanced Cascades} follows from the following result (see \cite{BosekDGKLZ}).

\begin{theorem}\label{Theorem Golden Seeds}For every $k\geqslant 2$ there exists a golden seed of length $k$.
\end{theorem}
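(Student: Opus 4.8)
The plan is to strip condition (i) down to a free choice of signs on primes and then control the resulting sum by an induction driven by the largest prime below $k$.

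First I would pin down the exact content of (i). Writing $n=\prod_p p^{a_p}$ and applying $s_{ij}=s_is_j$ repeatedly along the factorization of $n$ (every partial product is a divisor of $n\le k$, hence is itself $\le k$), one sees that $s_n=\prod_p s_p^{\,a_p}$ is completely determined by the values $\epsilon_p:=s_p$ on the primes $p\le k$; conversely, every choice of signs $\epsilon_p\in\{-1,+1\}$ yields a string obeying (i). Thus a golden seed is exactly a choice $(\epsilon_p)_{p\le k}$ whose completely multiplicative extension satisfies $\Sigma(k):=\sum_{n=1}^k s_n\in\{-1,0,1\}$, there are indeed $2^{\pi(k)}$ admissible strings, and only condition (ii) remains.

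Next I would induct on $k$ using the largest prime $p\le k$. By Bertrand's Postulate $k/2<p\le k$, so $2p>k$ and $p$ is the only multiple of $p$ in $\{1,\dots,k\}$; moreover every $n$ with $p<n\le k$ is composite (there is no prime in $(p,k]$) and, being smaller than $2p$, has all its prime factors strictly below $p$. Hence all values $s_n$ with $n\ne p$ depend only on the signs $\epsilon_q$ with $q<p$, while $s_p=\epsilon_p$ is entirely free. Writing $\Sigma(k)=R\pm 1$ with $R:=\sum_{n\le k,\,n\ne p}s_n$ independent of $\epsilon_p$, one can choose $\epsilon_p$ so that $\Sigma(k)\in\{-1,0,1\}$ as soon as $|R|\le 2$. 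The problem thus reduces to producing, from signs on the primes below $p$, a value $R=\Sigma(p-1)+\tau$ with $|R|\le 2$, where $\tau=\sum_{p<n\le k}s_n$ is the contribution of the composite tail.

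The main obstacle is precisely the control of this tail $\tau$: the inductive hypothesis supplies a golden seed on $\{1,\dots,p-1\}$ (so $\Sigma(p-1)\in\{-1,0,1\}$), but that seed is tuned for the interval $\{1,\dots,p-1\}$ and says nothing about the extra composite terms $p+1,\dots,k$, which could a priori accumulate. To tame them I would use complete multiplicativity rather than treat the $s_n$ as free signs, and two handles look promising. One is the identity $\Sigma(k)=\sum_{d}\chi(d)\lfloor\sqrt{k/d}\rfloor$, the sum being over squarefree $d\le k$ with $\chi(d)=\prod_{p\mid d}\epsilon_p$, obtained by grouping each $n$ by its squarefree part $d$ (so $n=s^2d$ and $s_n=\chi(d)$); this isolates all dependence on the signs into a weighted character sum and exhibits, for instance, that $p$ contributes the single term $\epsilon_p$. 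The other is the cancellation forced by one prime: fixing $\epsilon_2=-1$ and splitting $\{1,\dots,k\}$ into chains $\{m,2m,4m,\dots\}$ with $m$ odd collapses every chain of even length to $0$, leaving only a sparse, sign-controllable remainder. I expect the decisive step to be combining such a cancellation with the largest-prime induction so that $\tau$ is provably absorbed, keeping $|R|\le 2$ and closing the induction; the base cases $k=2,3$ and the first few composite cases would be verified directly, as in the displayed table for $k=6$.
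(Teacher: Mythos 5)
Your setup is correct as far as it goes: condition (i) does force $s_n=\prod_p s_p^{a_p}$ for every $n\le k$, so strings satisfying (i) are exactly restrictions of completely multiplicative $\pm1$ functions determined by the signs $\epsilon_p$ on primes $p\le k$; and since the largest prime $p\le k$ exceeds $k/2$ (Bertrand), its sign enters $\Sigma(k)=\sum_{n\le k}s_n$ exactly once and can serve as a free $\pm1$ correction. But that reduction absorbs an error of only one unit, and the entire content of the theorem sits in the step you leave open: choosing the signs of the primes below $p$ so that $R=\Sigma(p-1)+\tau$ satisfies $|R|\le 2$, where $\tau=\sum_{p<n\le k}s_n$. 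Your induction cannot close this. The inductive hypothesis hands you a golden seed on $\{1,\dots,p-1\}$ but gives no control whatsoever over the composite tail $\tau$ (which under Bertrand alone may have close to $k/2$ terms), and you cannot re-tune any small prime $q$ to fix $\tau$ without wrecking the balance $\Sigma(p-1)\in\{-1,0,1\}$, since such a $q$ has many multiples below $p$. Neither of your two handles fills the hole: the identity $\Sigma(k)=\sum_d \chi(d)\lfloor\sqrt{k/d}\rfloor$ is a correct rewriting but yields no bound by itself, and the chain decomposition with $\epsilon_2=-1$ leaves a remainder $\sum s_m$ over the odd $m$ lying in the dyadic blocks $(k/2^{2j+1},k/2^{2j}]$, a set of density about $1/3$ in $\{1,\dots,k\}$ --- not sparse, and bounding that sum is essentially the original problem over again. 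You concede this yourself (``I expect the decisive step\dots''), so what you have is a reduction plus a hope, not a proof.

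The idea you are missing --- and it is the crux of the paper's argument --- is to start not from an induction but from one explicit completely multiplicative coloring whose partial sums are known in closed form. The paper takes $g$ with $g(q)=+1$ or $-1$ according to whether the prime $q$ is congruent to $1$ or $2$ modulo $3$, and $g(3)=+1$; by a result of Borwein, Choi, and Coons, $\sum_{i=1}^{k}g(i)$ equals the number of digits $1$ in the ternary expansion of $k$, hence is nonnegative and at most $\log_3 k+1$. This a priori logarithmic bound is what makes a finishing move possible: flipping the sign of a prime $q\equiv 1 \pmod 3$ with $k/2<q\le k$ changes $\Sigma(k)$ by exactly $-2$ (such a $q$ occurs only once in $\{1,\dots,k\}$) while preserving multiplicativity on the range, so at most $\log_3 k+1$ such flips bring $\Sigma(k)$ into $\{-1,0,1\}$, and Dirichlet's theorem supplies enough primes of that form in $[k/2,k]$ for large $k$ (small $k$ are handled by separate, partly computational, arguments in the cited manuscript). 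Your largest-prime correction is precisely this flipping mechanism with a single flip; without a starting coloring whose discrepancy is already tiny, the mechanism has nothing to finish off.
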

\begin{proof}[Proof (sketch)]
	A basic idea of the proof is simple. We start with a multiplicative $2$-coloring $g$ of positive integers specified by taking $g(p)=\pm 1$ in accordance to whether a prime $p$ is congruent to $+1$ or $-1$ modulo $3$, with $g(3)=+1$. It can be proved that $\sum_{i=1}^{k}g(i)$ is exactly equal to the number of $1$'s in the ternary expansion of $k$ (see \cite{BorweinCC}). In particular, this sum is never negative and bounded from above by $\log_3k+1$. So, to get a golden seed $S$ of length $k$ it suffices to change the sign $+1$ into $-1$ of at most $\log_3k+1$ primes of the form $3t+1$ lying in the interval $[k/2,k]$. This operation will not affect multiplicativity of $S$. That there exists sufficient number of primes of that form in this interval follows from the celebrated Dirichlet's theorem on primes in arithmetic progressions (see \cite{Apostol}). This gives the result for sufficiently large $k$. Complete proof demands more delicate tricks together with some computational experiments (see \cite{BosekDGKLZ}).
\end{proof}

\subsection{Rejmer's Algorithm}To prove Theorem \ref{Theorem Golden Seeds} we considered firstly a different approach proposed by Rejmer. It is a simple algorithm producing golden seeds in a greedy way.

Let $k$ be a fixed positive integer. Our aim is to construct a golden seed $S$ of length $k$. We start with putting $s_1=+1$. In each consecutive step we add new sign trying to preserve both properties, balance and multiplicativity. So, in the second step we put $s_2=-1$.

Suppose that after $j-1$ steps, $j>2$, we obtained a golden seed $s_1s_2\dots s_{j-1}$. We distinguish two cases.

\begin{itemize}
	\item [1.] If $j-1$ is even, then $\sum_{i=1}^{j-1}s_i=0$. Thus no choice for $s_j$ may destroy balance. If $j$ is composite, then $s_j$ is determined by multiplicativity. If $j$ is prime, then we put $s_j=-1$.
	\item [2.] If $j-1$ is odd, then $j$ is even, so $s_j$ is determined by multiplicativity. Since $\sum_{i=1}^{j-1}s_i=\pm 1$, we may have either $\sum_{i=1}^{j}s_i=0$ or $\sum_{i=1}^{j}s_i=\pm 2$. In the former case we are done. In the later case we look for the largest prime $p>j/2$ for which $s_p$ has wrong sign and switch it. This makes the string $s_1s_2\dots s_j$ balanced.
\end{itemize}

We proceed similarly in next steps until obtaining a golden seed of a desired length. Notice that by Bertrand's Postulate, there is always a prime between $j/2$ and $j$. It is ever not clear that there will always be a prime whose sign-switching would improve balance. For instance, in the $16$th step of the algorithm we get the following string:
$$	\begin{array}{|c|c|c|c|c|c|c|c|c|c|c|c|c|c|c|c|}
\hline
1&2&3&4&5&6&7&8&9&10&11&12&13&14&15&16 \\
\hline
+&-&-&+&-&+&-&-&+&+&+&-&-&+&+&+\\
\hline
\end{array}
$$
which has $9$ pluses and $7$ minuses. To fix this imbalance, we go back to the first prime to the left, which is $13$. However, the sign of $13$ is $-$, so switching it would only increase imbalance. Fortunately, the next prime is $11$ with $+$, so we may switch it to get a balanced multiplicative string.

We do not known if Rejmer's algorithm runs ad infinitum.

\begin{conjecture}\label{Conjecture Rejmer}
	Rejmer's algorithm never stops.
\end{conjecture}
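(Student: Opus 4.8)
The plan is to reduce the conjecture to a purely local feasibility statement about the signs of primes in a shifting window, and then to control those signs by the distribution of primes in arithmetic progressions. Recall that after every step the partial sum $\sum_{i=1}^{j}s_i$ is forced, by parity, to equal $0$ when $j$ is even and $\pm 1$ when $j$ is odd; hence the algorithm can only \emph{stop} at an even step $j$ (Case 2), where the multiplicatively forced value of $s_j$ pushes the sum to $\pm 2$ and no admissible prime is available for correction. Thus it suffices to prove the following: \emph{whenever the running sum reaches $+2$ (respectively $-2$) at an even step $j$, the interval $(j/2,j]$ contains at least one prime $p$ with $s_p=+1$ (respectively $s_p=-1$).} The restriction $p>j/2$ is exactly what keeps the flip from disturbing multiplicativity, since then $2p>j$ and no multiple of $p$ beyond $p$ itself lies in the current string.

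First I would set up a bookkeeping of the lifetime of each prime inside the active window. A prime $p$ enters the window precisely at step $j=p$, where Case 1 assigns it the value $-1$, and it leaves after step $j=2p-1$, after which its sign is frozen. During its lifetime $p$ may be toggled several times, always as the \emph{largest} prime of the requested sign. The key structural observation I would exploit is the asymmetry between the two correction types: a $-2$ correction needs a prime currently carrying $-1$, and such primes are abundant because every prime enters the window with sign $-1$; a $+2$ correction, by contrast, needs a prime carrying $+1$, and such a prime can only be present if it was produced by an earlier $-2$ correction during its lifetime. Hence the genuine danger is a $+2$ correction performed when the window has been depleted of $+1$-primes.

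To rule this out I would try to maintain a \emph{reservoir inequality} comparing, across the window $(j/2,j]$, the number of primes of each sign against the number of anticipated $+2$ corrections, and argue by amortization that $-2$ corrections manufacture $+1$-primes at least as fast as $+2$ corrections consume them. The supply of raw primes would come from analytic input: by Bertrand's Postulate the window is never empty, and by stronger results on primes in short intervals it contains $\gg j/\log j$ primes, while Dirichlet's theorem on primes in arithmetic progressions (mod $3$) would let me compare the greedy sign pattern with the well-understood ternary colouring $g$ used in the proof of Theorem \ref{Theorem Golden Seeds}, whose partial sums are always nonnegative and $O(\log k)$. The hope is that keeping the greedy colouring within bounded distance of $g$ forces a persistent surplus of $+1$-primes in the window exactly when $+2$ corrections occur.

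The hard part will be precisely this coupling. The signs $s_p$ of the window primes are not given by any closed formula; they are produced by the algorithm's own earlier choices, so one cannot simply count $+1$-primes against $-1$-primes from prime-distribution estimates. Proving that the greedy corrections never conspire to empty the window of the sign currently needed amounts to a discrepancy statement about a self-referential multiplicative colouring, and it is this self-reference---rather than any shortage of primes---that places the conjecture squarely in the difficult territory of Liouville-type and Erd\H{o}s-discrepancy phenomena, and explains why it remains open.
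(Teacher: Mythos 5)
You have not proved the statement, and neither does the paper: Conjecture \ref{Conjecture Rejmer} is posed there as an open problem --- the paper says explicitly that it is not known whether Rejmer's algorithm runs ad infinitum --- so there is no proof to compare yours against, and any purported proof should be treated with suspicion. Your structural analysis is correct and is a genuine sharpening of the problem: the algorithm can fail only at an even step $j$ where the multiplicatively forced value of $s_j$ makes the partial sum $\pm 2$; the constraint $p>j/2$ is exactly what protects multiplicativity; every prime enters the active window carrying $-1$, so a prime carrying $+1$ can exist in the window only as the residue of an earlier correction of a $-2$ imbalance. This correctly isolates the dangerous event as a $+2$ imbalance arriving when the window $(j/2,j]$ has been depleted of $+1$-primes.

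The gap is exactly where you place it, and that gap is the entire problem. The ``reservoir inequality'' is never formulated, let alone proved: you give no invariant that is both strong enough to guarantee a prime of the needed sign at every even step and actually preserved by the algorithm's own greedy corrections. The analytic inputs you invoke cannot close it. Bertrand's Postulate and prime counts in $(j/2,j]$ bound only the total number of primes in the window, not the number of each sign; the signs are determined by the algorithm's history, not by any congruence condition, so Dirichlet's theorem gives no handle on them. The proposed coupling with the ternary multiplicative colouring $g$ from the proof of Theorem \ref{Theorem Golden Seeds} is pure hope: no mechanism is offered that would keep the greedy sequence within bounded distance of $g$, and the paper's own data show the greedy sequence already drifting away from the most natural multiplicative comparison point, the Liouville function, at $n=41$ and at many later primes. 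As you yourself concede, the missing step is a discrepancy-type statement about a self-referential multiplicative colouring --- which is to say, it is the conjecture itself. Your proposal is a sound reduction and a fair account of the obstruction, but it is not a proof, and the statement remains open.
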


Rejmer made some computational experiments with his algorithm. In particular, he run it up to $10^9$ steps producing in this way a golden seed of that length (and all smaller lengths on the way). Notice that the first half terms of this seed will not be changed in the future. Thus, assuming validity of Conjecture \ref{Conjecture Rejmer}, the algorithm defines an intriguing recursive binary sequence $R(n)$ over $\{-1,+1\}$. Up to $n=40$ Rejmer's sequence coincides with the Liouville function $\lambda(n)$, but $R(41)=+1$. The same happens for many other primes, in particular $R(97)=R(101)=+1$. One may suspect that there will be infinitely many primes $p$ with $R(p)=+1$, as well as with $R(p)=-1$.

\subsection{The Erd\H {o}s Discrepancy Problem}

In 1932 Erd\H {o}s posed an intriguing problem (see \cite{Erdos}): \emph{Is there a constant $C$ and a $2$-coloring of $\mathbb N$ such that every finite cascade is $C$-balanced?} This is much stronger property than in Bosek's question as it asks for one coloring that will be good for \emph{all} finite cascades. This seems unbelievable and actually Erd\H {o}s conjectured that the answer is negative. It took long time and many efforts until Tao \cite{Tao} finally proved this conjecture (see \cite{Soundararajan}).

\begin{theorem}[Tao \cite{Tao}]\label{Theorem Tao}
	In every $2$-coloring of $\mathbb N$ there are cascades of arbitrarily large balance.
\end{theorem}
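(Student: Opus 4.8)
The plan is to argue by contradiction: suppose there were a constant $C$ and a coloring $f\colon\mathbb N\to\{-1,+1\}$ whose balance on every cascade is at most $C$, so that $\left|\sum_{i=1}^{k}f(id)\right|\le C$ for all $d,k\ge 1$. My first move would be to replace the arbitrary $f$ by a \emph{completely multiplicative} function $g$, one satisfying $g(mn)=g(m)g(n)$ with $|g(n)|\le 1$. This reduction was established in the Polymath5 project by a Fourier and averaging argument: a sequence of bounded discrepancy can be used to manufacture a completely multiplicative function that still has bounded discrepancy (in a logarithmically weighted sense). The point is that the obstruction now lives entirely among multiplicative functions, where the machinery of analytic number theory becomes available.

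The technical heart of the argument, and the step I expect to be the genuine obstacle, is a correlation estimate: a logarithmically averaged instance of the Elliott conjecture asserting that a multiplicative function $g$ which is \emph{not} pretentious must decorrelate with its shifts, $\sum_{n\le x}g(n)\overline{g(n+h)}/n=o(\log x)$. Here ``pretentious'' is measured by the Granville--Soundararajan distance $\mathbb D\bigl(g,\,n\mapsto n^{it}\chi(n)\bigr)$ to the twisted Dirichlet characters. I would prove the estimate by an \emph{entropy decrement argument}. For each prime $p$, complete multiplicativity links the distribution of $g$ on an interval at scale $N$ to its distribution at the coarser scale $N/p$; the Shannon entropies of the resulting measures are uniformly bounded and can drop by a fixed amount at only finitely many scales, so they are nearly constant along a sparse sequence of primes. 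This near-constancy forces an approximate statistical independence across scales, from which the decay of the two-point correlations follows.

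Armed with this estimate I would force a dichotomy on our bounded-discrepancy $g$. If $g$ were non-pretentious, the correlation bound, fed into the short-interval results of Matom\"aki and Radziwi\l\l, would make the partial sums $\sum_{i}g(i)$ fluctuate far beyond any fixed constant, contradicting the bound $C$. Hence $g$ must be pretentious, i.e. close to some twisted character $n^{it}\chi(n)$.

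It then remains to rule out the pretentious case, which yields the final contradiction. A genuine real character $\chi$ has bounded partial sums, but it is \emph{not} a $\{-1,+1\}$-valued coloring: it vanishes at the primes dividing its modulus. A completely multiplicative $\pm 1$ function merely pretending to be $n^{it}\chi$ is forced to assign honest $\pm 1$ values at those primes and to carry the archimedean twist $n^{it}$, and analysing cascades rooted at these ``bad'' primes and prime powers shows the discrepancy of $g$ must after all grow without bound. Thus both horns of the dichotomy contradict the assumption of bounded balance, so no coloring $f$ with balance bounded by $C$ can exist; in every $2$-coloring of $\mathbb N$ there are cascades of arbitrarily large balance.
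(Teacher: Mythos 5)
The paper itself offers no proof of this statement: it is Tao's resolution of the Erd\H{o}s Discrepancy Problem, quoted as a known deep theorem, so there is no in-paper argument to compare yours against. That said, your sketch is, in its architecture, a faithful reconstruction of Tao's actual published proof: the Polymath5 Fourier reduction to completely multiplicative functions, the logarithmically averaged two-point Elliott/Chowla correlation estimate proved by the entropy decrement argument, the pretentious/non-pretentious dichotomy measured by the Granville--Soundararajan distance, and the direct elimination of the pretentious case. It is worth noting that the obstruction you invoke in the pretentious case is precisely the Borwein--Choi--Coons phenomenon that this survey itself exploits in the proof of Theorem \ref{Theorem Golden Seeds}: the completely multiplicative $\pm 1$ function agreeing with the character mod $3$ except at the prime $3$ has partial sums equal to the number of $1$'s in the ternary expansion of $n$, hence unbounded.

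Two points in your outline are stated imprecisely and would have to be repaired exactly as Tao does. First, the Fourier reduction does not produce a deterministic completely multiplicative $g$ with $|g(n)|\le 1$ and bounded discrepancy; it produces a \emph{stochastic} (random) completely multiplicative function taking values in the unit circle, whose discrepancy is bounded only in the mean-square sense $\mathbb{E}\bigl|\sum_{j=1}^{n}g(j)\bigr|^{2}\le C$. No deterministic reduction is known, and the entire subsequent dichotomy must therefore be run for random multiplicative functions (correlations in expectation, a random pretentious character and twist), which is where a good deal of the technical work in Tao's paper lies. Second, the Matom\"aki--Radziwi\l\l{} theorem is not a separate device applied after the correlation estimate to make the partial sums fluctuate; it is an ingredient \emph{inside} the proof of the logarithmically averaged Elliott-type theorem, while the contradiction in the non-pretentious case comes from the fact that bounded mean-square discrepancy of a completely multiplicative function by itself forces non-trivially large two-point correlations, which that theorem forbids. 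With these corrections your outline is the standard---indeed the only known---proof; as written it is a high-level sketch that defers its two hardest steps to theorems of Polymath5 and Tao, which is unavoidable for a result of this depth.
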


A subset $B$ of positive integers is called \emph{balanceable} if there is a constant $C$ and $2$-coloring of $\mathbb N$ such that every cascade whose length is in $B$ is $C$-balanced. So, by Theorem \ref{Theorem Balanced Cascades} we know that every singleton is balanceable, while by Tao's result the whole set $\mathbb N$ is not.

How big can a balanceable set be? It is not hard to see that it can be infinite. For instance, let $B$ be the set of positive integers whose ternary expansion does not contain $1$'s. If $g$ is a multiplicative function from the proof of Theorem \ref{Theorem Golden Seeds}, then for every $k\in B$ we know that $\sum_{i=1}^{k}g(i)=0$. Therefore a $2$-coloring defined by $g$ is balanced on each cascade whose length is in $B$. The set $B$ is however of density zero in $\mathbb N$. Is there a \emph{dense} balanceable subset $B\subseteq \mathbb N$?

\begin{conjecture}\label{Conjecture Balanceable}
	There exists a balanceable set of positive density.
\end{conjecture}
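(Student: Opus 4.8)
The plan is to reduce the problem to a single partial-sum sequence by working with completely multiplicative colorings, exactly as in the analysis preceding Theorem~\ref{Theorem Golden Seeds}. Suppose $f\colon\mathbb N\to\{-1,+1\}$ is completely multiplicative, and write $M_f(k)=\sum_{i=1}^{k}f(i)$. For any cascade $\{d,2d,\dots,kd\}$ we then have $\sum_{i=1}^{k}f(id)=f(d)\,M_f(k)$, so the balance of \emph{every} cascade of length $k$ equals $|M_f(k)|$, independently of its root $d$. Consequently, a multiplicative $f$ makes the set $B=\{k:\,|M_f(k)|\le C\}$ balanceable with constant $C$, and so it suffices, via this multiplicative route, to exhibit a completely multiplicative $\pm1$ function whose partial sums $M_f(k)$ lie in a fixed bounded window $[-C,C]$ for a set of indices $k$ of positive density.

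My first attempt would be to tune the character-like construction already used for golden seeds. Recall that the function $g$ with $g(p)=\pm1$ according as $p\equiv\pm1\pmod 3$ satisfies $M_g(k)=s_3(k)$, the number of $1$'s in the ternary expansion of $k$. This grows only like $\tfrac13\log_3 k$ on average, yet $\{k:\,s_3(k)\le C\}$ still has density zero, since a positive proportion of the digits are bad. The same obstruction defeats every fixed base and, more generally, every real Dirichlet character: characters do have uniformly bounded partial sums by P\'olya--Vinogradov, but they vanish on integers sharing a factor with the modulus, and patching their values to $\pm1$ reintroduces an error supported on a positive-density set whose cancellation one would have to control. A purely random choice of prime signs is even worse: by Harper's theorem the partial sums of a Rademacher multiplicative function are typically of size $\sqrt{k}/(\log\log k)^{1/4}$, so $|M_f(k)|\le C$ holds only on a density-zero set; the Liouville function $\lambda$ of Section~3.2 behaves the same way.

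The main obstacle is a genuine tension between two forces. On one hand, Tao's Theorem~\ref{Theorem Tao} forces $M_f(k)$ to be \emph{unbounded} for any completely multiplicative $\pm1$ function, so one can at best hope that the walk $k\mapsto M_f(k)$ returns to and lingers in a bounded window for a positive proportion of time, while making its unavoidable large excursions only on a sparse set of scales. On the other hand, the increments $f(k)$ are not free: they are products of the chosen prime signs, so arranging sustained cancellation over long stretches, without a compensating drift that would push $M_f$ away from the origin, is exactly the kind of rigidity that Hal\'asz-type mean-value theorems and the digit-sum identities above fail to overcome. I would therefore try a deterministic block construction, engineering the prime signs so that within each base-$b$ digit block the partial sum returns to a bounded window while the rare excursions are confined to a density-zero set of transition scales, and then establish the bound through an explicit recursion for $M_f$ across blocks. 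Should strict multiplicativity prove too rigid, the fallback is to abandon it and instead build, by a direct combinatorial argument, a coloring that simultaneously controls cascades at all roots $d$; but producing such a coloring whose balanceable set has positive density is precisely the step that appears to lie just beyond current methods.
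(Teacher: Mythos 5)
There is no proof in the paper to compare against: the statement you were given is Conjecture~\ref{Conjecture Balanceable}, which the paper poses as an open problem. What the paper does provide is exactly the context you reconstructed. It observes that for a multiplicative coloring every cascade of length $k$ has balance $|\sum_{i=1}^{k}f(i)|$ independently of its root $d$ (this is the computation in the Golden Seeds subsection), it exhibits the character-like function $g$ modulo $3$ whose partial sums count the $1$'s in the ternary expansion, yielding an infinite but density-zero balanceable set, and it remarks, immediately after the conjecture, that one may hope for a multiplicative $f$ with $\{n:b_f(n)\leqslant C\}$ of positive density. Your opening reduction — that a completely multiplicative $f$ makes $B=\{k:|M_f(k)|\leqslant C\}$ balanceable, so it suffices to control the partial sums on a positive-density set of indices — is correct and coincides with the paper's own reformulation.

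The gap, which to your credit you acknowledge explicitly, is that neither of your proposed continuations is carried out. The ``deterministic block construction'' is a hope, not an argument: you give no mechanism by which the prime signs could be chosen so that the walk $k\mapsto M_f(k)$ returns to a bounded window on a positive proportion of scales, and no way to reconcile this with the unboundedness forced by Theorem~\ref{Theorem Tao}; confining the excursions to a density-zero set of scales is precisely the unresolved difficulty, not a step you can assume. The non-multiplicative fallback is likewise a restatement of the conjecture rather than an approach. So your proposal is a sound reduction together with an accurate survey of why the obvious candidates (Dirichlet characters, the Liouville function, random prime signs) fail, but it proves nothing beyond what the paper already records — which is the expected outcome, since the conjecture remains open. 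One caution on framing: the failure of these specific constructions does not show the multiplicative route is hopeless; the paper (cf.\ the discussion around Conjecture~\ref{Conjecture Bounded Multiplicative}) deliberately leaves that door open.
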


Analogous problem can be stated for multiplicative colorings. By Theorem \ref{Theorem Tao} we know that there is no multiplicative $2$-coloring $f$ with bounded balance $b_f(n)$. But maybe for some constant $C$ there is a multiplicative $2$-coloring $f$ for which the set $\{n\in \mathbb N:b_f(n)\leqslant C\}$ has positive density.

\subsection{Divine version of the Erd\H {o}s Discrepancy Problem} The following problem was posed by Bosek. Let $f$ be a coloring of $\mathbb N$ by two colors $\{-1,+1\}$. A subset $A=\{a_1,a_2,\dots,a_n\}$ of $\mathbb N$, with $a_1<a_2<\dots<a_n$, is \emph{divinely} colored if at least half of elements among $a_2,a_3,\dots,a_n$ have different color than $a_1$. For non-singletons this means that if $f(a_1)=+1$, then $\sum_{x\in A}f(x)\leqslant 1$, while $\sum_{x\in A}f(x)\geqslant -1$ when $f(a_1)=-1$. In particular, a balanced set is divinely colored. This notion was inspired by \emph{majority coloring} of digraphs (see \cite{KreutzerOSZW}).

\begin{conjecture}\label{Conjecture Divine}
	There exists a $2$-coloring of $\mathbb N$ in which every cascade is divinely colored.
\end{conjecture}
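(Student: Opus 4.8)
The plan is to recast the divine condition as a one-sided discrepancy bound and attack it first through multiplicative colorings. For a cascade rooted at $r$ the smallest element is $r$ itself, so $A=\{r,2r,\dots,kr\}$ is divinely colored precisely when $f(r)\sum_{i=1}^{k}f(ir)\le 1$: indeed, if $f(r)=+1$ this says the sum is at most $1$, and if $f(r)=-1$ it says the sum is at least $-1$, which are exactly the two cases in the definition. Thus the whole conjecture is equivalent to
\[
\sum_{i=1}^{k}f(r)f(ir)\le 1\qquad\text{for all }r\ge 1,\ k\ge 1,
\]
that is, along every ray $r,2r,3r,\dots$ the colors read relative to $f(r)$ must have all partial sums bounded above by $1$. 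If $f$ is multiplicative then $f(r)f(ir)=f(r)^{2}f(i)=f(i)$, so every ray carries the same relative sequence and the entire family of constraints collapses to the single requirement that the partial sums $T_{k}=\sum_{i=1}^{k}f(i)$ satisfy $T_{k}\le 1$ for all $k$. The first step is therefore to look for a multiplicative coloring with this property.

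A short forcing argument shows that $T_{k}\le 1$ pins the coloring down very tightly. Since $f(1)=+1$ we must take $f(2)=-1$ (else $T_{2}=2$); then $f(4)=f(2)^{2}=+1$ forces $f(3)=-1$ through $T_{4}=f(3)+1\le 1$; continuing in this way drives $f$ inexorably toward the Liouville function $\lambda$. But the summatory Liouville function $L(k)=\sum_{i\le k}\lambda(i)$ is positive for infinitely many $k$ — the P\'olya inequality $L(k)\le 0$ being famously false — so the bound $T_{k}\le 1$ is a statement of exactly the RH-adjacent depth flagged in the introduction, where $b_{\lambda}(n)=O(n^{1/2+\epsilon})$ was seen to encode the Riemann Hypothesis. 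The moral is that a \emph{purely} multiplicative coloring cannot be expected to work, and that the key idea must be to break multiplicativity just enough to damp the sporadic excursions of the relative partial sums above $1$, while keeping every ray under control.

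The natural mechanism mirrors the golden-seed construction. Using compactness of $\{-1,+1\}^{\mathbb N}$ in the product topology, it suffices to produce, for each $N$, a coloring of $\{1,\dots,N\}$ for which every cascade contained in $\{1,\dots,N\}$ is divine; since each constraint $f(r)\sum_{i\le k}f(ir)\le 1$ depends on only finitely many coordinates, a limit of such finite colorings satisfies all constraints simultaneously. To build the finite colorings I would start from $\lambda$ (or from the base-$3$ multiplicative function $g$ of Theorem \ref{Theorem Golden Seeds}) and, whenever some ray threatens to push a relative partial sum above $1$, flip the sign of a carefully chosen prime in a dyadic window $[k/2,k]$, exactly as the proof of Theorem \ref{Theorem Golden Seeds} switches a prime $3t+1$ to restore balance; Bertrand's postulate and Dirichlet's theorem supply enough primes to flip.

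The main obstacle is that, unlike in the golden-seed setting where one repairs a single cascade, a fixed coloring must be divine for \emph{all} roots $r$ at once, and flipping $f(p)$ to correct the ray through one root simultaneously perturbs every ray whose ratios pass through $p$. Controlling this interference — guaranteeing that the corrections demanded by different roots can be reconciled rather than cascading into fresh violations — is the heart of the matter. I expect a clean resolution will require either a coloring that is multiplicative modulo a sparse error set whose per-ray contribution to the partial sums is provably $o(1)$, or a probabilistic/entropy argument in the spirit of Tao's resolution of the Erd\H{o}s Discrepancy Problem (Theorem \ref{Theorem Tao}), adapted to exploit the crucial one-sided, root-aligned nature of the divine condition that makes it plausibly weaker than the balanced version ruled out by Tao's theorem.
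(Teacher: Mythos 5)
The first thing to say is that Conjecture \ref{Conjecture Divine} is an \emph{open problem}: the paper offers no proof of it, only the observation that for a \emph{multiplicative} coloring $f$ divinity of every cascade is equivalent to the one-sided bound $\sum_{i=1}^{n}f(i)\leqslant 1$ for all $n\geqslant 1$, that the natural candidate $\lambda$ fails this bound (P\'{o}lya's conjecture is false, with smallest counterexample $n=906150257$), and that one is thereby led to the relaxed Conjecture \ref{Conjecture Bounded Multiplicative}, itself open. Your opening reformulation --- that $A=\{r,2r,\dots,kr\}$ is divine exactly when $f(r)\sum_{i=1}^{k}f(ir)\leqslant 1$, which for multiplicative $f$ collapses to $T_k\leqslant 1$ for all $k$ --- is correct and coincides with the paper's own remarks, and the compactness reduction to finite colorings is sound as far as it goes. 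But nothing after that constitutes a proof, and you say so yourself: the construction of the finite colorings is exactly what is missing.

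The gap is not a technicality; it is the conjecture. The interference phenomenon you identify --- flipping $f(p)$ to repair the ray through one root perturbs every ray passing through a multiple of $p$ --- is precisely why the golden-seed machinery cannot be imported: Theorem \ref{Theorem Golden Seeds} produces, for each fixed $k$, a \emph{different} coloring balanced on cascades of that single length, whereas here one coloring must handle all roots and all lengths simultaneously. Worse, your repair strategy keeps $f$ multiplicative, or multiplicative off a sparse set, and within the multiplicative class even the weakened requirement $\sum_{i\leqslant n}f(i)\leqslant C$ is Conjecture \ref{Conjecture Bounded Multiplicative}; the paper notes in addition that a Liouville-like function altered at finitely many primes whose partial sums remain non-positive would imply the Riemann Hypothesis. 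So the plan reduces an open conjecture to statements that are at least as hard, plausibly of RH depth. (A smaller inaccuracy: the constraint $T_k\leqslant 1$ does force $f(2)=f(3)=-1$ and the signs of several further small primes, but your claim that it drives $f$ ``inexorably'' to $\lambda$ is not established, and it is the only step you offer in support of the assertion that purely multiplicative colorings cannot work.) Your one observation that genuinely goes beyond the paper's discussion --- that the divine condition is one-sided and root-aligned, hence not directly ruled out by Theorem \ref{Theorem Tao} --- is a fair point in favor of the conjecture's plausibility, but it is evidence, not proof. In short: there is no proof here to compare with the paper's, because neither you nor the paper has one.
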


Let us see what this property means for multiplicative colorings. Since $f(1)=+1$, we must have $\sum_{i=1}^{n}f(i)\leqslant 1$ for all $n\geqslant 1$. This obviously guarantee that every cascade is divinely colored as the set $d,2d,3d,\dots$ carries the same coloring $f$ or its negative $-f$, according to the sign of $d$.

Is there a multiplicative $2$-coloring of $\mathbb N$ whose partial sums are always bounded by one? A natural candidate is the Liouville function $\lambda(n)$, as it is defined by $\lambda(p)=-1$ for each prime $p$. Actually in 1919 P\'{o}lya \cite{Polya} conjectured that $\sum_{i=1}^{n}\lambda(i)\leqslant 0$ for all $n\geqslant  2$, which if true, would imply the Riemann Hypothesis. Unfortunately the statement is not true, but the smallest counter-example is $n=906150257$ (see \cite{BorweinCRW}).

As in the original Erd\H {o}s Discrepancy Problem, one may consider a relaxed version of divine coloring with some constant error $C$. This leads to the following problem.

\begin{conjecture}\label{Conjecture Bounded Multiplicative}
	There exists a constant $C$ and a multiplicative $2$-coloring $f$ of $\mathbb N$ by colors $\{-1,+1\}$ such that $\sum_{i=1}^{n}f(i)\leqslant C$ for all $n\geqslant 1$.
\end{conjecture}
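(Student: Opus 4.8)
The plan is to recast Conjecture \ref{Conjecture Bounded Multiplicative} analytically and then attack it through a combination of a probabilistic construction and classical multiplicative-function theory. First I would record two structural constraints on any admissible $f$. Since $f$ is completely multiplicative with values in $\{-1,+1\}$, Tao's Theorem \ref{Theorem Tao}, applied to the cascades rooted at $1$ (whose balances are exactly $|M(k)|$, where $M(n)=\sum_{i=1}^{n}f(i)$), already forces $M(n)$ to be unbounded; hence an admissible $f$ must have $\liminf_n M(n)=-\infty$ while $\limsup_n M(n)\le C$, so we are only ever asking for a \emph{one-sided} barrier. Second, the mean value of $f$ must be zero: by Wirsing's theorem it exists and equals
\[
\lim_{x\to\infty}\frac1x\sum_{n\le x}f(n)=\prod_{p}\frac{1-1/p}{1-f(p)/p}=\prod_{p:\,f(p)=-1}\frac{p-1}{p+1}\ge 0 ,
\]
and if this product is positive then $M(n)\sim cn\to+\infty$ and the upper bound fails. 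Thus any candidate must satisfy $\sum_{p:\,f(p)=-1}1/p=\infty$, i.e.\ ``enough'' primes must be coloured $-1$.

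The obvious candidate is the Liouville function $\lambda$, which has mean value $0$. However $L(n)=\sum_{i\le n}\lambda(i)$ is known \emph{not} to be bounded above: Pólya's conjecture $L(n)\le 0$ (which would have implied RH) fails, and the oscillations of $L$, governed by the non-trivial zeros of $\zeta$, carry it to arbitrarily large positive values. So $\lambda$ is inadmissible and one must perturb it. A tempting shortcut is to mirror the golden-seed function $g$ of Theorem \ref{Theorem Golden Seeds}, whose partial sums equal the number of $1$'s in the ternary expansion and are bounded \emph{below} by $0$: its negation $-g$ would have partial sums bounded above by $0$, exactly as required. Unfortunately $-g$ is not completely multiplicative, and there is no completely multiplicative $\pm1$ function realising these mirrored partial sums, so the shortcut fails and genuinely new input is needed.

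My main line of attack would therefore be to bias $f$ toward $-1$ just enough to keep the mean value $0$ while giving the walk $M(n)$ a persistent downward pull, and then to control the upward excursions. Concretely, I would choose the signs $f(p)$ as independent random variables weighted toward $-1$ so that the Euler product $F(s)=\prod_p(1-f(p)p^{-s})^{-1}$ has no pole at $s=1$, and study $M(x)=\tfrac{1}{2\pi i}\int F(s)\,x^s/s\,ds$ via Perron's formula; boundedness above of $M$ would follow from a one-sided positivity and zero-location statement for the continuation of $F$ into $\Re s<1$. An alternative, combinatorial route is an adaptive construction in the spirit of Rejmer's Algorithm: fix $f(p)$ for increasing primes and, whenever $M(n)$ threatens to exceed $C$, repair the excursion by switching the sign of a recently assigned prime in $[n/2,n]$ (available by Bertrand's Postulate), now aiming at the global one-sided target rather than at a fixed-length seed.

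The hard part will be obtaining \emph{one-sided} control of the oscillation of a multiplicative partial-sum function. This is precisely the regime inhabited by the Riemann Hypothesis and the Erd\H{o}s Discrepancy Problem: Tao's entropy-decrement method (Theorem \ref{Theorem Tao}) yields unboundedness of $|M(n)|$ but is insensitive to sign and so gives no upper barrier at all, while the analytic route demands exactly the fine information about the zeros of $F$ that lies beyond present technology. Likewise, proving that a Rejmer-type repair process never exhausts the available primes to switch is already open for fixed seed length (Conjecture \ref{Conjecture Rejmer}). I therefore expect that, although the reduction to a mean-value-zero completely multiplicative function with a downward drift is clean, the decisive step --- certifying the upper cap $C$ --- is of Riemann-Hypothesis strength and is the principal obstacle.
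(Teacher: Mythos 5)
This statement is one of the paper's open problems: Conjecture \ref{Conjecture Bounded Multiplicative} is stated without proof, and the paper's only accompanying remarks are that massive computer experiments were performed in the cited manuscript, and that modifying the Liouville function at \emph{finitely} many primes produces functions whose one-sided negativity would still imply the Riemann Hypothesis. So there is no ``paper proof'' to match, and your proposal does not supply one either: it is a problem analysis followed by two proposed attacks, neither of which is carried out, and you yourself concede at the end that the decisive step is open. As a referee I must therefore record that the gap is total --- the conjecture remains unproved after your text. That said, your preliminary analysis is sound and matches the paper's framing: by Theorem \ref{Theorem Tao} applied to a multiplicative coloring (where every cascade's balance equals $|M(n)|$ for $M(n)=\sum_{i\le n}f(i)$), any admissible $f$ must have $M(n)$ unbounded below; the Wirsing mean-value computation correctly forces $\sum_{p:\,f(p)=-1}1/p=\infty$; and the observation that negating the golden-seed function $g$ of Theorem \ref{Theorem Golden Seeds} destroys complete multiplicativity (indeed $(-g)(ab)\neq(-g)(a)(-g)(b)$) correctly kills the most tempting shortcut.

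Two of your proposed routes have identifiable defects beyond mere incompleteness. First, the randomized construction cannot work as stated: for completely multiplicative $f$ with independently chosen random signs $f(p)$, the partial sums are expected (and in the i.i.d.\ uniform case known, by results on random multiplicative functions) to oscillate with growing amplitude in \emph{both} directions, so almost surely $\limsup_n M(n)=+\infty$; randomness gives symmetry, which is exactly what you need to break. Second, the Rejmer-style repair is more delicate than in the fixed-length setting of Theorem \ref{Theorem Golden Seeds}: switching the sign of a prime $p\in[n/2,n]$ at stage $n$ retroactively changes $M(m)$ for all $m\ge p$ and alters the colors of all future multiples of $p$, so one cannot ``lock in'' earlier balance the way the golden-seed argument does, and even the fixed-length version of such a process is open (Conjecture \ref{Conjecture Rejmer}). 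Your closing diagnosis --- that certifying the one-sided cap is of Riemann-Hypothesis strength --- is consistent with the paper's remarks and is, in effect, the reason the statement appears here as a conjecture rather than a theorem.
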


Some massive computer experiments were made in \cite{BosekDGKLZ}. Curiously, "more negative" than Liouville's function are colorings obtained by switching sign of just one small prime number. However, it can be proved that switching signs of a finite subset of primes gives a function whose negativity would still imply the Riemann Hypothesis, in much the same way as it is for the Liouville function.

\section{Back to the 1-2-3 Conjecture}
We conclude the paper with one more problem related to the 1-2-3 Conjecture. Let $\mathcal F$ be a \emph{hypergraph}, that is, a family of subsets of some finite set $V$. Suppose that each element $v\in V$ is assigned a real number $f(v)$. For every set $A\in \mathcal F$, let $S(A)$ denote the sum of numbers assigned to the vertices of $A$:$$S(A)=\sum_{x\in A}f(x).$$We say that $f$ is a \emph{cool decoration} of $\mathcal F$ if $S(A)\neq S(B)$ for every pair $A,B\in \mathcal F$ whose intersection is non-empty.

The following problem is a natural generalization of the 1-2-3 Conjecture for hypergraphs (see \cite{BennetDFH ElJC}, \cite{KalkowskiKP JGT} for other variants).

\begin{conjecture}\label{Conjecture 1-2-3 Hypergraphs}
	For every $\Delta$ there is a constant $C=C(\Delta)$ such that every hypergraph $\mathcal F$ of maximum degree $\Delta$ has a cool decoration from the set $\{1,2,\dots,C\}$.
\end{conjecture}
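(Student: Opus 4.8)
The plan is to produce the decoration by the probabilistic method with the Lov\'asz Local Lemma, using anticoncentration to control the bad events. First I would record two reductions that make infinitely many conflicting pairs harmless. Since every value is at least $1$ and at most $C$, one has $S(A)\in[|A|,C|A|]$; hence if $A\subsetneq B$ then $S(B)-S(A)=\sum_{x\in B\setminus A}f(x)\geqslant 1$ automatically, and if $|B|>C|A|$ then $S(B)>S(A)$ automatically. Thus only \emph{live} pairs matter: those with $A\not\subseteq B$, $B\not\subseteq A$, and comparable sizes. Now assign to each vertex an independent value chosen uniformly from $\{1,2,\dots,C\}$, and for each live pair let $E_{A,B}$ be the event $S(A)=S(B)$. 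Writing $m_{A,B}=|A\triangle B|$, the difference $S(A)-S(B)$ is a signed sum of the $m_{A,B}$ independent uniform variables on the symmetric difference, so by the Erd\H{o}s--Littlewood--Offord anticoncentration inequality
$$P(E_{A,B})\leqslant \min\!\Big(\tfrac1C,\; \tfrac{c}{\sqrt{m_{A,B}}}\Big)$$
for an absolute constant $c$. The goal is then to feed these probabilities into an asymmetric Local Lemma with weights $x(E_{A,B})$ decreasing in $m_{A,B}$, so that factors with large symmetric difference are charged little.

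The favorable half of the estimate is where bounded degree does real work: it forces a set to have only $O(\Delta)$ \emph{tightly overlapping} conflicting partners. Indeed, fix $A$ with $|A|=s$ and consider live $B$ with $m_{A,B}\leqslant s/2$; then $|A\cap B|=s-|A\setminus B|\geqslant s-m_{A,B}\geqslant s/2$, so such a $B$ shares at least half of $A$'s vertices. Counting shared incidences gives $\sum_{B}|A\cap B|\leqslant\sum_{v\in A}(\deg v-1)\leqslant s(\Delta-1)$, whence there are at most $2(\Delta-1)$ such partners. Each contributes at most $1/C$, so the high-probability (small symmetric difference) events around any set carry total weight $O(\Delta/C)$, which is forced below any threshold by taking $C=C(\Delta)$ large. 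This is precisely the regime the plain $\{1,2,3\}$ heuristic worries about, and here it is under control.

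The hard part will be the opposite regime: a large set $A$ meeting \emph{many loosely attached} partners, each sharing only a few vertices with $A$ but having large symmetric difference. Bounded degree permits up to $\sim s(\Delta-1)$ such partners, and even with the anticoncentration gain $1/\sqrt{m}\approx 1/\sqrt{s}$ their total weight grows like $\Delta\sqrt{s}$, so the naive Local Lemma neighborhood sum diverges in $s$ and the symmetric criterion provably fails. The crucial observation is that these conflicts are individually \emph{easy}: once $S(A)$ is fixed, each loosely attached $B$ still has private vertices (degree concentrated away from $A$) whose values can be slid to avoid the single forbidden sum. The plan is therefore to exploit this near-independence rather than treat the events as a generic dense cluster, via a lopsided (negative-dependency) version of the Local Lemma in which $E_{A,B}$ and $E_{A,B'}$ are \emph{not} declared dependent when their free private randomness decouples them.

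If the lopsided bookkeeping proves too delicate, I would fall back on a two-phase decomposition designed around the same dichotomy. In the first phase, restrict attention to the bounded ``skeleton'' formed by the tightly overlapping live pairs isolated above; by the controlled count $O(\Delta)$ this subproblem has bounded local complexity and should succumb to a clean Local Lemma or Moser--Tardos resampling on a suitable core of vertices. In the second phase, fix those core values and resolve every remaining (loosely attached) live pair greedily using its private vertices, one pair at a time, each time ruling out only a single value. The obstacle that I expect to absorb most of the effort is making this peeling order well founded, that is, guaranteeing that the private vertices used for one pair are not exhausted by the constraints of another; quantifying this via the degree budget, and showing the two phases do not interfere, is where the real content of a full proof would lie.
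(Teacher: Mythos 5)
This statement is a conjecture, not a theorem: the paper offers no proof and explicitly notes that it is wide open even for $\Delta=3$ (only $\Delta=2$, which is essentially the graph case, is known, with $C=5$). Your text is accordingly a research program rather than a proof, and the genuine gap sits exactly where you place it yourself. The preliminary parts are sound: the reduction to ``live'' pairs is correct, the anticoncentration bound $P(S(A)=S(B))\leqslant\min(1/C,\,c/\sqrt{m_{A,B}})$ is valid, and the count showing that each set $A$ of size $s$ has at most $2(\Delta-1)$ live partners with $m_{A,B}\leqslant s/2$ is right. But neither fallback resolves the loose-overlap regime. For the lopsided Local Lemma you would need an actual negative-dependency structure, i.e.\ bounds of the form $P\bigl(E_{A,B}\mid\bigcap_{(A',B')\in S}\overline{E_{A',B'}}\bigr)\leqslant x_{A,B}\prod_{(A',B')}(1-x_{A',B'})$ for every family $S$ of non-neighbors; the phrase ``private randomness decouples them'' is not such a bound --- conditioning on avoiding many other sum-equalities can bias the distribution of the private vertices arbitrarily, and sum-equality events are not among the known classes (matchings, permutations) where lopsided dependency graphs have been established.

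The two-phase peeling fails for a concrete structural reason. When you repair a live pair $(A,B)$ by sliding a private vertex $v$ of $B$, you simultaneously change $S(B')$ for every other set $B'$ containing $v$ (up to $\Delta-1$ of them), and you change $S(B)$ itself, which is constrained by all of $B$'s live partners --- up to $|B|(\Delta-1)$ of them, a quantity unbounded in terms of $\Delta$ alone. So a single repair does not rule out ``only a single value'': it must dodge a number of forbidden sums proportional to the sizes of the sets involved, and each repair propagates new conflicts to previously settled pairs. There is no well-founded peeling order, and no potential-function or amortization argument is offered to terminate the process. This circular-repair problem is precisely the core difficulty of the conjecture --- it is what Kalkowski-type sliding arguments overcome for $\Delta=2$ by exploiting the linear order on a graph's vertices, and it is the part your proposal leaves entirely open, as you candidly acknowledge in your final sentence.
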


The original 1-2-3 Conjecture is obtained by taking as $\mathcal F$ a dual hypergraph to a given graph $G$. The maximum degree of such hypergraph is $\Delta(\mathcal F)=2$, as every edge in a simple graph has two ends. As observed by Przyby\l o, Conjecture \ref{Conjecture 1-2-3 Hypergraphs} is true in general for $\Delta=2$ (with constant $C=5$, as yet). It is wide open even for $\Delta=3$.

\end{document}